\def\dbE{\mathbb{E}}     
\def\dbF{\mathbb{F}}   \def\cF{{\cal F}}  
\def\dbH{\mathbb{H}}
\def\dbP{\mathbb{P}}     
   \def\cQ{{\cal Q}}  
\def\dbR{\mathbb{R}} \def\sR{\mathscr{R}}    
\def\dbS{\mathbb{S}}     
   \def\cU{{\cal U}}
\def\ss{\smallskip}      \def\lt{\left}       \def\hb{\hbox}
\def\ms{\medskip}        \def\rt{\right}      \def\ae{\hbox{\rm a.e.}}
        \def\lan{\langle}    \def\as{\hbox{\rm a.s.}}
\def\ds{\displaystyle}   \def\ran{\rangle}    \def\tr{\hbox{\rm tr$\,$}}
\def\ts{\textstyle}      \def\llan{\lt\lan}   
\def\no{\noindent}       \def\rran{\rt\ran}   
\def\ns{\noalign{\ss}}
\def\rf{\eqref}            \def\hp{\hphantom}
\def\deq{\triangleq}     \def\({\Big (}       \def\nn{\nonumber}
\def\les{\leqslant}      \def\){\Big )}       
\def\ges{\geqslant}      \def\[{\Big[}        
          \def\]{\Big]}        
      \def\q{\quad}        
         \def\qq{\qquad}      \def\1n{\negthinspace}
\def\cd{\cdot}           \def\2n{\1n\1n}      \def\3n{\1n\1n\1n}
              \def\Om{\Omega}  
         \def\D{\Delta}   \def\d{\delta}   \def\F{\Phi}     
\def\z{\zeta}         \def\Th{\Theta}      \def\si{\sigma}
\def\e{\varepsilon}     \def\l{\lambda}        
    \def\t{\tau}     \def\f{\varphi}  \def\i{\infty}   
\def\ba{\begin{array}}                \def\ea{\end{array}}
\def\bel{\begin{equation}\label}      \def\ee{\end{equation}}
\newtheorem{theorem}{Theorem}[section]
\newtheorem{definition}[theorem]{Definition}
\newtheorem{proposition}[theorem]{Proposition}
\newtheorem{lemma}[theorem]{Lemma}
\newtheorem{remark}[theorem]{Remark}
\newtheorem{example}[theorem]{Example}
\newenvironment{taggedassumption}[1]
 {\taggedassumptionx}
 {\endtaggedassumptionx}
\sloppy  \allowdisplaybreaks[4]
\begin{document}

\title{\bf Weak Closed-Loop Solvability of Stochastic Linear-Quadratic Optimal Control Problems}
\author{Jingrui Sun\thanks{Department of Mathematics, University of Central Florida,
                           Orlando, FL 32816, USA (Email: {\tt sjr@} {\tt mail.ustc.edu.cn}).}\, ,\,
        Hanxiao Wang\thanks{School of Mathematical Sciences, Fudan University,
                    Shanghai 200433, China (Email: {\tt hxwang14@} {\tt fudan.edu.cn}). This author is supported in part by the China Scholarship Council, while visiting University of Central Florida.}\, ,\,   and  \
       Jiongmin Yong\thanks{Department of Mathematics, University of Central Florida,
                           Orlando, FL 32816, USA (Email: {\tt jiongmin.yong@ucf.edu}).
                           This author is supported in part by NSF DMS-1812921.}}

\maketitle

\no\bf Abstract. \rm
Recently it has been found that for a stochastic linear-quadratic
optimal control problem (LQ problem, for short) in a finite horizon, open-loop solvability is strictly weaker than closed-loop solvability which is equivalent to the regular solvability of the corresponding Riccati equation. Therefore, when an LQ problem is merely open-loop solvable not closed-loop solvable, which is possible, the usual Riccati equation approach will fail to produce a state feedback representation of open-loop optimal controls. The objective of this paper is to introduce and investigate the notion of weak closed-loop optimal strategy for LQ problems so that its existence is equivalent to the open-loop solvability of the LQ problem. Moreover, there is at least one open-loop optimal control admitting a state feedback representation. Finally, we present an example to illustrate the procedure for finding weak closed-loop optimal strategies.

%

\ms

\no\bf Keywords. \rm
stochastic linear-quadratic optimal control, Riccati equation, open-loop solvability,
weak closed-loop solvability, state feedback.

\ms

\no\bf AMS subject classifications. \rm 49N10, 49N35, 93E20.

\section{Introduction}\label{Sec:Introduction}

Let $(\Om,\cF,\dbP)$ be a complete probability space on which a standard one-dimensional Brownian motion $W(\cd)=\{W(t);0\les t<\i\}$ is defined,
and let $\dbF=\{\cF_t\}_{t\ges0}$ be the natural filtration of $W(\cd)$ augmented by all the $\dbP$-null sets in $\cF$. Let $0\les t<T$ and consider the following controlled linear stochastic differential equation (SDE, for short) on the finite horizon $[t,T]$:
\bel{state}\left\{\begin{aligned}
   dX(s) &=[A(s)X(s)+ B(s)u(s)+ b(s)]ds\\
         &\hp{=\ } +[C(s)X(s) + D(s)u(s)+\si(s)]dW(s),\q s\in[t,T],\\
     X(t)&= x,
\end{aligned}\right.\ee
where $A,C:[0,T]\to\dbR^{n\times n}$, $B,D:[0,T]\to\dbR^{n\times m}$ are given deterministic functions, called the {\it coefficients} of the {\it state equation} \rf{state}; $b,\si:[0,T]\times\Om\to\dbR^n$ are $\dbF$-progressively measurable processes, called the {\it nonhomogeneous terms}; and $(t,x)\in[0,T)\times\dbR^n$ is called the {\it initial pair}.
Here, $\dbR^n$ is the usual $n$-dimensional Euclidean space consisting of all $n$-tuple of real numbers, and $\dbR^{n\times m}$ is the set of all $n\times m$ real matrices. In the above, the process $u(\cd)$, which belongs to the following space:
\begin{align*}
\cU[t,T]&\equiv L_\dbF^2(t,T;\dbR^m)= \Big\{u:[t,T]\times\Om\to\dbR^m~|~u\hb{~is $\dbF$-progressively measurable}, \\
                     &\hp{\big\{}\ts \qq\qq\qq\qq\qq\qq\qq\qq\q\ds\hb{~and~}\dbE\int^T_t|u(s)|^2ds<\i\Big\},
\end{align*}
is called the {\it control process}, and the solution $X(\cd)$ of \rf{state} is called the {\it state process} corresponding to $(t,x)$ and $u(\cd)$.
According to the standard results of SDEs, under appropriate conditions, for any initial pair $(t,x)$ and any control $u(\cd)\in\cU[t,T]$, equation \rf{state} admits a unique (strong) solution
$X(\cd)\equiv X(\cd\,;t,x,u(\cd))$ which is continuous and square-integrable. To measure the performance of the control $u(\cd)$, we introduce the following quadratic {\it cost functional}:
\begin{eqnarray}\label{cost}
J(t,x;u(\cd)) &\3n=\3n& \dbE\big\{\lan GX(T),X(T)\ran + 2\lan g,X(T)\ran \nn\\
         &\3n~\3n& +\int_t^T\Bigg[\llan\begin{pmatrix}Q (s)& S(s)^\top \\ S(s) & R(s)\end{pmatrix}
                                 \begin{pmatrix}X(s) \\ u(s)\end{pmatrix},
                                 \begin{pmatrix}X(s) \\ u(s)\end{pmatrix}\rran \nn\\
\3n&~\3n& +\,2\llan\begin{pmatrix}q(s) \\ \rho(s)\end{pmatrix},
                   \begin{pmatrix}X(s) \\ u(s)\end{pmatrix}\rran\Bigg]ds\Bigg\},
\end{eqnarray}
where $G\in\dbR^{n\times n}$ is a symmetric constant matrix;
$g$ is an $\cF_T$-measurable random variable taking values in $\dbR^n$;
$Q:[0,T]\to\dbR^{n\times n}$, $S:[0,T]\to\dbR^{m\times n}$, and $R:[0,T]\to\dbR^{m\times m}$
are deterministic functions with $Q$ and $R$ being symmetric;
and $q:[0,T]\times\Om\to\dbR^n$, $\rho:[0,T]\times\Om\to\dbR^m$ are $\dbF$-progressively measurable processes.
In the above, $M^\top$ stands for the transpose of a matrix $M$. The problem that we are going to study is the following:

\ms

{\bf Problem (SLQ).} For any given initial pair $(t,x)\in[0,T)\times\dbR^n$,
find a control $\bar u(\cd)\in \cU[t,T]$ such that
\bel{inf J}J(t,x;\bar u(\cd))\les J(t,x;u(\cd)),\q\forall u(\cd)\in \cU[t,T].\ee

\ss

The above is called a {\it stochastic linear-quadratic (LQ, for short) optimal control problem}. Any $\bar u(\cd)\in L_\dbF^2(t,T;\dbR^m)$ satisfying \eqref{inf J} is called an {\it open-loop optimal control}
of Problem (SLQ) for the initial pair $(t,x)$; the corresponding state process $\bar X(\cd)\equiv X(\cd\,;t,x,\bar u(\cd))$ is called an {\it optimal state process}; and the function $V(\cd\,,\cd)$ defined by
$$V(t,x) \deq \inf_{u(\cd)\in\cU[t,T]}J(t,x;u(\cd));\qq(t,x)\in[0,T]\times\dbR^n$$
is called the {\it value function} of Problem (SLQ).

\ms

Note that in the special case when $b(\cd),\si(\cd),g,q(\cd),\rho(\cd)=0$,
the state equation \rf{state} and the cost functional \rf{cost} become
\bel{state0}\left\{\begin{aligned}
   dX(s) &=[A(s)X(s)+ B(s)u(s)]ds + [C(s)X(s) + D(s)u(s)]dW(s),\q s\in[t,T],\\
     X(t)&= x,
\end{aligned}\right.\ee
and
\bel{cost0}
J^0(t,x;u(\cd)) = \dbE\lt\{\lan GX(T),X(T)\ran
       +\int_t^T\llan\1n\begin{pmatrix}Q (s)& S(s)^\top \\ S(s) & R(s)\end{pmatrix}\1n
                        \begin{pmatrix}X(s) \\ u(s)\end{pmatrix}\1n,
                        \begin{pmatrix}X(s) \\ u(s)\end{pmatrix}\1n\rran ds\rt\}, \ee
respectively. We refer to the problem of minimizing \rf{cost0} subject to \rf{state0} as the {\it homogeneous LQ problem associated with Problem (SLQ)}, denoted by Problem (SLQ)$^0$. The value function of Problem (SLQ)$^0$ will be denoted by $V^0(\cd\,,\cd)$.

\ms

LQ optimal control is a classical and fundamental problem in control theory,
whose history can be traced back to the works of Bellman--Glicksberg--Gross \cite{Belman-Gicksberg-Gross 1958}, Kalman \cite{Kalman 1960}, and Letov \cite{Letov 1961}. These works were concerned with deterministic cases, i.e., the state equation is a linear ordinary differential equation (ODE, for short), and all the involved functions are deterministic. Stochastic LQ problems were firstly studied by Wonham \cite{Wonham 1968} in 1968.
Later, Bismut \cite{Bismut 1976} carried out a detailed analysis for stochastic LQ optimal control with random coefficients. See also some follow-up works of Davis \cite{Davis 1977}, Bensoussan \cite{Bensoussan 1982} and Tang \cite{Tang 2003,Tang 2015}.
In the classical setting, it is typically assumed that the cost functional has positive semi-definite weighting matrices for the control and the state.
Namely, the following assumption was taken for granted: For some constant $\d>0$,
\bel{standard-condition} G\ges0, \q R(s)\ges\d I, \q Q(s)-S(s)^\top R(s)^{-1}S(s)\ges0,\q~s\in[0,T]. \ee
Such a condition ensures that the LQ problem admits a unique open-loop optimal control and that the following associated Riccati equation has a unique positive definite solution on $[0,T]$ (with the argument $s$ being suppressed):
\bel{Ric}\left\{\begin{aligned}
  & \dot P+PA+A^\top P+C^\top PC+Q \\
  & \hp{\dot P} -(PB+C^\top PD+S^\top)(R+D^\top PD)^{-1}(B^\top P+D^\top PC+ S)=0, \\
  & P(T)=G.
\end{aligned}\right.\ee
Further, the unique open-loop optimal control can be expressed as a linear feedback of the current state via the solution to \rf{Ric} (see \cite{Bismut 1976} or \cite[Chapter 6]{Yong-Zhou 1999}). It is noteworthy that \rf{standard-condition} is a quite strong set of conditions for the existence of an open-loop optimal control. Later developments show that a stochastic LQ problem might still admit an open-loop optimal control even if the control weight $R(\cd)$ is negative definite; see \cite{Chen-Li-Zhou 1998,Lim-Zhou 1999,Chen-Yong 2000,Chen-Zhou 2000,Rami-Moore-Zhou 2001,Chen-Yong 2001}
for some relevant works on the so-called {\it indefinite} stochastic LQ control problem.

\ms

Recently, Sun--Yong \cite{Sun-Yong 2014} and Sun--Li--Yong \cite{Sun-Li-Yong 2016} investigated the open-loop and closed-loop solvabilities of stochastic LQ problems. It was shown that the existence of an open-loop optimal control (open-loop solvability of LQ problem) is equivalent to the solvability of the associated optimality system (which is a constrained forward-backward SDE, abbreviated as FBSDE), and that the existence of a closed-loop optimal strategy (closed-loop solvability of LQ problem) is equivalent to the {\it regular} solvability of the following {\it generalized} Riccati equation (GRE, for short):
\bel{GRE}\left\{\begin{aligned}
  & \dot P+PA+A^\top P+C^\top PC+Q \\
  & \hp{\dot P} -(PB+C^\top PD+S^\top)(R+D^\top PD)^\dag(B^\top P+D^\top PC+ S)=0, \\
%
  & P(T)=G,
\end{aligned}\right.\ee
where $M^\dag$ denotes the Moore-Penrose pseudoinverse of a matrix $M$. In the above, the argument $s$ is again suppressed; We will do that in the following, as long as no ambiguity will arise. It was found (\cite{Sun-Yong 2014,Sun-Li-Yong 2016}) that the existence of a closed-loop optimal strategy implies the existence of an open-loop optimal control, but not vice versa. Thus, there are some LQ problems that are open-loop solvable, but not closed-loop solvable; for such problems, one could not expect to get a regular solution (which does not exist) to the associated GRE \rf{GRE}, so that the state feedback representation of the open-loop optimal control might be impossible. To be more convincing, let us look at the following simple example.

\begin{example}\label{ex-1.1}\rm
Consider the one-dimensional state equation
$$\left\{\begin{aligned}
   dX(s) &= [-2X(s)+u(s)]ds + 2X(s)dW(s), \q s\in[t,1],\\
    X(t) &= x,
\end{aligned}\right.$$
and the nonnegative cost functional
$$ J(t,x;u(\cd))=\dbE |X(1)|^2. $$
In this example, the associated GRE reads
\bel{example-Ric} \dot P(s)=0,\q s\in[0,1]; \qq P(1)=1. \ee
Clearly, $P(s)\equiv1$ is the unique solution of \rf{example-Ric}. From \cite{Sun-Li-Yong 2016}, we know that such a solution is not regular. A usual Riccati equation approach specifies the corresponding state feedback control as follows (noting that $R(\cd)=0$, $D(\cd)=0$, and $0^\dag=0$):
$$ u^*(s) \deq -\big[R(s)+D(s)^\top P(s)D(s)\big]^\dag\big[B(s)^\top P(s)+D(s)^\top P(s)C(s)+ S(s)\big]X(s) \equiv0,$$
which is {\it not}  open-loop optimal for any nonzero initial state $x$.
In fact, let $(t,x)\in[0,1)\times\dbR$ be an arbitrary but fixed initial pair with $x\ne0$. By the variation of constants formula, the state process $X^*(\cd)$ corresponding to $(t,x)$ and $u^*(\cd)$ is given by
$$X^*(s)=e^{2W(s)-4s}x,\qq s\in[t,1].$$
Hence,
$$J(t,x;u^*(\cd))=\dbE|X^*(1)|^2=x^2>0.$$
On the other hand, let $\bar u(\cd)$ be the control defined by
$$\bar u(s)\equiv {x\over t-1}e^{2W(s)-4s}, \q s\in[t,1]. $$
By the variation of constants formula, the state process $\bar X(\cd)$ corresponding to $(t,x)$ and $\bar u(\cd)$ is given by
$$\ba{ll}
\ns\ds\bar X(s)=e^{2W(s)-4s}\[x+\int_t^se^{-2W(r)+4r}\,\bar u(r)dr\]\\
\ns\ds\qq~=e^{2W(s)-4s}\[x+{s-t\over t-1}x\],\qq s\in[t,1],\ea $$
which satisfies $\bar X(1)=0$. Hence,
$$ J(t,x;\bar u(\cd)) =\dbE|\bar X(1)|^2 =0 <J(t,x;u^*(\cd)). $$
Since the cost functional is nonnegative, we see that $\bar u(\cd)$ is open-loop optimal for the initial pair $(t,x)$, but $u^*(\cd)$ is not.
\end{example}

The above example suggests that the usual solvability of the generalized Riccati equation \rf{GRE} may not be helpful in handling open-loop solvability of certain stochastic LQ problems. It is then natural to ask: {\sl When Problem (SLQ) is merely open-loop solvable, not closed-loop solvable; is it still possible to get a linear state feedback representation for an open-loop optimal control?} The objective of this paper is to tackle this problem. We shall provide an alternative characterization of the open-loop solvability of Problem (SLQ) using the perturbation approach introduced in \cite{Sun-Li-Yong 2016}. We point out here that our result, which avoids the subsequence extraction, is a sharpened version of
\cite[Theorem 6.2]{Sun-Li-Yong 2016}. In order to obtain a linear state feedback representation of open-loop optimal control for Problem (SLQ),
we introduce the notion of weak closed-loop strategies. This notion is a slight extension of the closed-loop strategy developed in \cite{Sun-Yong 2014,Sun-Li-Yong 2016}. We shall prove that as long as Problem (SLQ) is open-loop solvable, there always exists a weak closed-loop strategy
whose outcome is an open-loop optimal control. Note that it might be that the open-loop optimal control is not unique and we are able to represent one of them in the state feedback form.

\ms

The rest of the paper is organized as follows. In Section \ref{Sec:Preliminaries}, we collect some preliminary results and introduce a few elementary notions for Problem (SLQ). Section \ref{Sec:3} is devoted to the study of open-loop solvability by a perturbation method.
In section \ref{Sec:4}, we show how to obtain a weak closed-loop optimal strategy and establish the equivalence between open-loop and weak closed-loop solvability. An example is presented in Section \ref{Sec:Example} to illustrate the results we obtained.

\section{Preliminaries}\label{Sec:Preliminaries}

Throughout this paper, and recall from the previous section, $M^\top$ stands for the transpose of a matrix $M$, $\tr(M)$ the trace of $M$, $\dbR^{n\times m}$ the Euclidean space consisting of $(n\times m)$ real matrices, endowed with the Frobenius inner product $\lan M,N\ran\mapsto\tr[M^\top N]$. We shall denote by $I_n$ the identity matrix of size $n$ and by $|M|$ the Frobenius norm of a matrix $M$. Let $\dbS^n$ be the subspace of $\dbR^{n\times n}$ consisting of symmetric matrices. For $M,N\in\dbS^n$, we use the notation $M\ges N$ (respectively, $M>N$) to indicate that $M-N$ is positive
semi-definite (respectively, positive definite). Let $[t,T]$ be a subinterval of $[0,\i)$ and $\dbH$ be a Euclidean space (which could be $\dbR^n$, $\dbR^{n\times m}$, $\dbS^n$, etc.). We further introduce the following spaces of functions and processes:
\begin{align*}
C([t,T];\dbH):
   &\hb{~~the space of $\dbH$-valued, continuous functions on $[t,T]$}. \\
L^p(t,T;\dbH):
   &\hb{~~the space of $\dbH$-valued functions that are $p$th $(1\les p\les\i)$} \\
   &\hb{~~power Lebesgue integrable on $[t,T]$}.\\
L^2_{\cF_T}(\Om;\dbH):
   &\hb{~~the space of $\cF_T$-measurable, $\dbH$-valued random variables $\xi$} \\
   &\hb{~~such that $\dbE|\xi|^2<\i$}. \\
L^2_\dbF(\Om;L^1(t,T;\dbH)):
   &\hb{~~the space of $\dbF$-progressively measurable, $\dbH$-valued processes} \\
   &\hb{~~$\f:[t,T]\times\Om\to\dbH$ such that $\dbE\big[\ds\int_t^T|\f(s)|ds\big]^2<\i$}.\\
L_\dbF^2(t,T;\dbH):
   &\hb{~~the space of $\dbF$-progressively measurable, $\dbH$-valued processes} \\
   &\hb{~~$\f:[t,T]\times\Om\to\dbH$ such that $\ds\dbE\int_t^T|\f(s)|^2ds<\i$}. \\
L_\dbF^2(\Om;C([t,T];\dbH)):
   &\hb{~~the space of $\dbF$-adapted, continuous, $\dbH$-valued processes} \\
   &\hb{~~$\f:[t,T]\times\Om\to\dbH$ such that $\dbE\big[\sup_{s\in[t,T]}|\f(s)|^2\big]<\i$}.
\end{align*}

\ss

To guarantee the well-posedness of the state equation \rf{state}, we adopt the following assumption:
\begin{taggedassumption}{(A1)}\label{ass:A1}
The coefficients and the nonhomogeneous terms of \rf{state} satisfy
$$\left\{\begin{aligned}
   A(\cd) &\in L^1(0,T;\dbR^{n\times n}),     &&&   B(\cd) &\in L^2(0,T;\dbR^{n\times m}),\\
   C(\cd) &\in L^2(0,T;\dbR^{n\times n}),     &&&   D(\cd) &\in L^\i(0,T;\dbR^{n\times m}),\\
   b(\cd) &\in L^2_\dbF(\Om;L^1(0,T;\dbR^n)), &&& \si(\cd) &\in L_\dbF^2(0,T;\dbR^n).
\end{aligned}\right. $$
\end{taggedassumption}

The following result, whose proof can be found in \cite[Proposition 2.1]{Sun-Yong 2014},
establishes the well-posedness of the state equation under the assumption \ref{ass:A1}.

\begin{lemma}\label{lmm:well-posedness-SDE}
Let {\rm\ref{ass:A1}} hold. Then for any initial pair $(t,x)\in[0,T)\times\dbR^n$ and control $u(\cd)\in\cU[t,T]$, the state equation \rf{state} admits a unique solution $X(\cd)\equiv X(\cd\,;t,x,u(\cd))$. Moreover, there exists a constant $K>0$, independent of $(t,x)$ and $u(\cd)$, such that
$$ \dbE\lt[\sup_{t\les s\les T}|X(s)|^2\rt]
\les K\dbE\lt[|x|^2+\lt(\int_t^T|b(s)|ds\rt)^2 + \int_t^T|\si(s)|^2ds + \int^T_t|u(s)|^2ds\rt].$$
\end{lemma}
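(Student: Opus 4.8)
The plan is to establish the a priori estimate for the linear SDE \eqref{state} by combining the standard contraction-based existence argument with an application of the Burkholder--Davis--Gundy (BDG) and Gronwall inequalities. First I would recall that under \ref{ass:A1} the map $u(\cd)\mapsto X(\cd)$ is well-defined: the coefficients $A,C$ and the inputs $Bu+b$, $Du+\si$ are integrable in the appropriate senses, so a standard Picard iteration (or a fixed-point argument in $L^2_\dbF(\Om;C([t,T];\dbR^n))$) yields a unique continuous, square-integrable solution. Since the proof of existence and uniqueness is cited from \cite[Proposition 2.1]{Sun-Yong 2014}, the substantive content here is the estimate, so I would concentrate on that.

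For the estimate, the key steps in order are as follows. I would apply It\^o's formula to $|X(s)|^2$ on $[t,r]$ for $r\in[t,T]$, producing a drift term controlled by $|A|\,|X|^2$, $|B|\,|X|\,|u|$, $|b|\,|X|$, together with the quadratic-variation contribution $\int_t^r|C X+Du+\si|^2ds$ and a stochastic integral $\int_t^r 2\lan X, (CX+Du+\si)\ran dW$. Taking the supremum over $r$ and then expectations, I would handle the martingale term by the BDG inequality, which bounds $\dbE[\sup_{t\les r\les T}|\int_t^r 2\lan X,(CX+Du+\si)\ran dW|]$ by a constant times $\dbE[(\int_t^T|X|^2|CX+Du+\si|^2ds)^{1/2}]$; splitting this with Young's inequality lets me absorb a small multiple of $\dbE[\sup_s|X(s)|^2]$ into the left-hand side. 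The remaining terms are estimated using Cauchy--Schwarz (to separate $|X|$ from $|u|,|b|,|\si|$), the inequality $2ab\les\e a^2+\e^{-1}b^2$, and the boundedness/integrability furnished by \ref{ass:A1} (note in particular that $D\in L^\i$ makes $\int_t^T|D|^2|u|^2ds\les\|D\|_\i^2\int_t^T|u|^2ds$, while $C\in L^2$ and $A\in L^1$ feed directly into Gronwall). After absorption one arrives at
$$\dbE\Big[\sup_{t\les r\les T}|X(r)|^2\Big]\les K_0\,\dbE\Big[|x|^2+\big(\ts\int_t^T|b|ds\big)^2+\int_t^T|\si|^2ds+\int_t^T|u|^2ds\Big]+K_1\int_t^T\f(s)\,\dbE\Big[\sup_{t\les r\les s}|X(r)|^2\Big]ds,$$
where $\f\in L^1(0,T)$ collects the $|A|$ and $|C|^2$ contributions.

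Finally I would close the argument by Gronwall's inequality applied to the function $s\mapsto\dbE[\sup_{t\les r\les s}|X(r)|^2]$, whose weight $\f$ is integrable by \ref{ass:A1}; this yields the stated bound with $K$ depending only on $T$ and on the norms of $A,C,D$, hence independent of $(t,x)$ and $u(\cd)$. The constant's independence of the initial time $t$ follows because all the integrals run over subintervals of $[0,T]$ and the Gronwall factor is bounded by $\exp(\int_0^T\f)$. The main obstacle I anticipate is the careful bookkeeping in the martingale step: one must use BDG together with Young's inequality so that the $\sup$-norm of $X$ appearing on the right is multiplied by a coefficient small enough to be absorbed, and one must verify that the integrand of the stochastic integral is genuinely in $L^1_\dbF$ a priori (which is why the estimate is first derived for the Picard iterates, or equivalently via a localization/stopping-time argument, before passing to the limit). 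Everything else is routine application of \ref{ass:A1} and the elementary inequalities.
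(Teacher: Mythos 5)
Your proposal is correct and follows the standard It\^o--BDG--Gronwall argument; the paper itself does not prove this lemma but simply cites \cite[Proposition 2.1]{Sun-Yong 2014}, whose proof proceeds along essentially the same lines you describe. Your handling of the structural points specific to \ref{ass:A1} (the $(\int_t^T|b|\,ds)^2$ term arising from $\sup|X|\cdot\int|b|$, the $L^1$ Gronwall weight collecting $|A|$ and $|C|^2$, and $D\in L^\i$ controlling the $Du$ contribution) is exactly what makes the stated form of the estimate come out.
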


To ensure that the random variables in the cost functional \rf{cost} are integrable,
we assume the following holds:
\begin{taggedassumption}{(A2)}\label{ass:A2}
The weighting coefficients in the cost functional satisfy
$$\left\{\2n\ba{ll}
\ns\ds Q(\cd)\in L^1(0,T;\dbS^n),\qq S(\cd)\in L^2(0,T;\dbR^{m\times n}),
\qq R(\cd)\in L^\infty(0,T;\dbS^m), \\
\ns\ds q(\cd)\in L^2_\dbF(\Om;L^1(0,T;\dbR^n)),\q\rho(\cd)\in L_\dbF^2(0,T;\dbR^m),\q g\in L^2_{\cF_T}(\Om;\dbR^n),\q G\in\dbS^n.\ea\right.$$

\end{taggedassumption}
\begin{remark}\rm
Suppose that \ref{ass:A1} holds. Then according to \autoref{lmm:well-posedness-SDE}, for any initial pair $(t,x)\in[0,T)\times\dbR^n$ and any control $u(\cd)\in\cU[t,T]$,
equation \rf{state} admits a unique (strong) solution $X(\cd)\equiv X(\cd\,;t,x,u(\cd))$
which belongs to the space $L_\dbF^2(\Om;C([t,T];\dbR^n))$.
If, in addition, \ref{ass:A2} holds, then the random variables on the right-hand side of \rf{cost} are integrable and hence Problem (SLQ) is well-posed. It is worth pointing out that we do not impose any positive-definiteness/nonnegativeness conditions on $Q(\cd)$, $R(\cd)$, and $G$.
\end{remark}

Now we recall some basic notions of stochastic LQ optimal control problems.

\begin{definition}\rm
Problem (SLQ) is said to be
\begin{enumerate}[(i)]
\item ({\it uniquely}) {\it open-loop solvable at $(t,x)\in[0,T)\times\dbR^n$} if
      there exists a (unique) $\bar u(\cd)\equiv \bar u(\cd\,;t,x)\in \cU[t,T]$ (depending on $(t,x)$) such that
      \bel{open-optimal} J(t,x;\bar u(\cd))\les J(t,x;u(\cd)),\q\forall u(\cd)\in\cU[t,T]. \ee
      Such a $\bar u(\cd)$ is called an {\it open-loop optimal control for $(t,x)$}.
      %
      %
\item ({\it uniquely}) {\it open-loop solvable} if it is (uniquely) open-loop solvable at any initial pair $(t,x)\in[0,T)\times\dbR^n$.
\end{enumerate}
\end{definition}

\begin{definition}\rm
Let $\Th:[t,T]\to\dbR^{m\times n}$ be a deterministic function and $v:[t,T]\times\Om\to\dbR^m$
be an $\dbF$-progressively measurable process.
\begin{enumerate}[(i)]
\item We call $(\Th(\cd),v(\cd))$ a {\it closed-loop strategy} on $[t,T]$ if $\Th(\cd)\in L^2(t,T;\dbR^{m\times n})$
      and $v(\cd)\in L_\dbF^2(t,T;\dbR^m)$; that is,
      $$\int_t^T|\Th(s)|^2ds<\i, \q \dbE\int_t^T|v(s)|^2ds<\i.$$
      The set of all closed-loop strategies $(\Th(\cd),v(\cd))$ on $[t,T]$ is denoted by $\cQ[t,T]$.

\item A closed-loop strategy $(\Th^*(\cd),v^*(\cd))\in\cQ[t,T]$
      is said to be {\it optimal} on $[t,T]$ if
      \bel{closed-optimal*}\ba{ll}
       \ns\ds J(t,x;\Th^*(\cd)X^*(\cd)+v^*(\cd))\les J(t,x;\Th(\cd)X(\cd)+v(\cd)),\\
       \ns\ds\qq\qq\qq\qq\qq\qq\forall x\in\dbR^n,~\forall(\Th(\cd),v(\cd))\in\cQ[t,T],\ea \ee
      where $X^*(\cd)$ is the solution to the {\it closed-loop system} under $(\Th^*(\cd),v^*(\cd))$ (with the argument s
      suppressed in the coefficients and non-homogeneous terms):
      \bel{closed-syst*}\left\{\begin{aligned}
         dX^*(s) &=\big[\big(A+B\Th^*\big)X^*(s)+Bv^*+b\big]ds \\
                 &\hp{=\ } +\big[\big(C+D\Th^*\big)X^*(s)+Dv^*+\si\big]dW(s),\qq s\in[t,T],\\
          X^*(t) &=x,
      \end{aligned}\right.\ee
      and $X(\cd)$ is the solution to the following closed-loop system under $(\Th(\cd),v(\cd))$:
      \bel{closed-syst}\left\{\begin{aligned}
         dX(s) &=\big[\big(A+B\Th\big)X(s)+Bv+b\big]ds \\
                 &\hp{=\ } +\big[\big(C+D\Th\big)X(s)+Dv+\si\big]dW(s),\qq\q s\in[t,T],\\
          X(t) &=x.
      \end{aligned}\right.\ee

\item If for any $t\in[0,T)$, a closed-loop optimal strategy (uniquely) exists on $[t,T]$,
      we say Problem (SLQ) is ({\it uniquely}) {\it closed-loop solvable}.
\end{enumerate}
\end{definition}

From \cite[Proposition 3.3]{Sun-Yong 2014}, we know that $(\Th^*(\cd),v^*(\cd))$ is a closed-loop optimal strategy on $[t,T]$ if and only if
\bel{closed-optimal}J(t,x;\Th^*(\cd)X^*(\cd)+v^*(\cd))\les J(t,x;u(\cd)),\q\forall x\in\dbR^n,\q\forall u(\cd)\in\cU[t,T].\ee
Comparing \rf{closed-optimal} with \rf{open-optimal}, we see that if $(\Th^*(\cd),v^*(\cd))$ is a closed-loop optimal strategy of Problem (SLQ) on $[t,T]$, then the {\it outcome} $u^*(\cd)\equiv\Th^*(\cd)X^*(\cd)+v^*(\cd)$ is an open-loop optimal control of Problem (SLQ) for the corresponding initial pair $(t,X^*(t))$. This means that the closed-loop solvability implies the open-loop solvability. However, there are examples showing that the reverse implication is not necessarily true, namely, it is possible that an LQ problem is only open-loop solvable, not closed-loop solvable; see \cite[Example 7.1]{Sun-Li-Yong 2016}. In another word, it is possible that some open-loop optimal control $\bar u(\cd)$ is not an outcome of some (regular) closed-loop optimal strategy. Since the closed-loop representation is very important in practice, we naturally ask: Is it possible for some (not necessarily all) open-loop optimal controls, one can find less regular closed-loop representation? Motivated by this, we introduce the following notion.

\begin{definition}\rm\label{def-wcloop}
Let $\Th:[t,T)\to\dbR^{m\times n}$ be a locally square-integrable deterministic function and $v:[t,T)\times\Om\to\dbR^m$ be a locally square-integrable $\dbF$-progressively measurable process; that is, $\Th(\cd)$ and $v(\cd)$ are such that for any $T^\prime\in[t,T)$
$$\int_t^{T'}|\Th(s)|^2ds<\i, \q \dbE\int_t^{T'}|v(s)|^2ds<\i.$$
\begin{enumerate}[(i)]
\item We call $(\Th(\cd),v(\cd))$ a {\it weak closed-loop strategy} on $[t,T)$ if for any initial state $x\in\dbR^n$,
      the outcome $u(\cd)\equiv\Th(\cd)X(\cd)+v(\cd)$ of $(\Th(\cd),v(\cd))$ belongs to $\cU[t,T]\equiv L_\dbF^2(t,T;\dbR^m)$, where $X(\cd)$ is the solution to the {\it weak closed-loop system}
      \bel{weak-syst}\left\{\begin{aligned}
         dX(s) &=\big[\big(A+B\Th\big)X(s)+Bv+b\big]ds \\
                 &\hp{=\ } +\big[\big(C+D\Th\big)X(s)+Dv+\si\big]dW(s),\qq\q s\in[t,T],\\
          X(t) &=x.
      \end{aligned}\right.\ee
      The set of all weak closed-loop strategies is denoted by $\cQ_w[t,T]$.
\item A weak closed-loop strategy $(\Th^*(\cd),v^*(\cd))$ is said to be {\it optimal} on $[t,T)$ if
      \bel{weak-closed-optimal*}\ba{ll}
      \ns\ds J(t,x;\Th^*(\cd)X^*(\cd)+v^*(\cd))\les J(t,x;\Th(\cd)X(\cd)+v(\cd)),\\
      \ns\ds\qq\qq\qq\qq\forall x\in\dbR^n,~\forall(\Th(\cd),v(\cd))\in\cQ_w[t,T],\ea \ee
      where $X(\cd)$ is the solution of the weak closed-loop system \rf{weak-syst}, and $X^*(\cd)$ is the solution to the weak closed-loop system \rf{weak-syst} corresponding to $(t,x)$ and $(\Th^*(\cd),v^*(\cd))$.
\item If for any $t\in[0,T)$, a weak closed-loop optimal strategy (uniquely) exists on $[t,T)$,
      we say Problem (SLQ) is ({\it uniquely}) {\it weakly closed-loop solvable}.
\end{enumerate}
\end{definition}

Similar to the case of closed-loop solvability, we have the following equivalence: A weak closed-loop strategy $(\Th^*(\cd),v^*(\cd))\in\cQ_w[t,T]$ is weakly closed-loop optimal
on $[t,T)$ if and only if
\bel{weak-closed-optimal*}J(t,x;\Th^*(\cd)X^*(\cd)+v^*(\cd))\les J(t,x;u(\cd)),\q\forall x\in\dbR^n,~\forall u(\cd)\in\cU[t,T]. \ee

\ms

We conclude this section with some existing results on the closed-loop and open-loop solvabilities of Problem (SLQ),
which play a basic role in our subsequent analysis.
For proofs and full discussion of these results, we refer the reader to Sun--Li--Yong \cite{Sun-Li-Yong 2016}.

\begin{theorem}\label{thm:SLQ-cloop-kehua}
Let {\rm\ref{ass:A1}--\ref{ass:A2}} hold. Then Problem {\rm(SLQ)} is closed-loop solvable
on $[t,T]$ if and only if the following two conditions hold:
\begin{enumerate}[\rm(i)]
\item the Riccati equation \rf{GRE} admits a solution $P(\cd)\in C([t,T];\dbS^n)$ such that
      \begin{align}
         \label{positivity}
         & R+D^\top PD\ges0, \q\ae~\hb{on}~[t,T],\\
         \label{L2-integrability}
         & \hat\Th \deq -(R+D^\top PD)^\dag(B^\top P+D^\top PC+S) \in L^2(t,T;\dbR^{m\times n}), \\
         \label{range-inclusion}
         & \sR(B^\top P+D^\top PC+S)\subseteq\sR(R+D^\top PD), \q\ae~\hb{on}~[t,T],
      \end{align}
      where $\sR(M)$ denotes the range of a matrix $M$;
\item the adapted solution $(\eta(\cd),\z(\cd))$ to the backward stochastic differential equation
      (BSDE, for short)
      \bel{eta-beta}\left\{\begin{aligned}
         d\eta(s) &=-\big[(A+B\hat\Th)^\top\eta(s) +(C+D\hat\Th)^\top\z(s) +(C+D\hat\Th)^\top P\si\\
                  &\hp{=-\big[} +\hat\Th^\top\rho +Pb +q\big]ds +\z(s) dW(s), \q s\in[t,T],\\
          \eta(T) &=g,
      \end{aligned}\right.\ee
      satisfies
      \begin{align}
      \label{SLQ:eta-z-condition-1}
      & B^\top\eta+D^\top\z+D^\top P\si+\rho \in\sR(R+D^\top PD), \q \ae~\hb{on}~[t,T],~\as, \\
      \label{SLQ:eta-z-condition-2}
      & \hat v \deq -(R+D^\top PD)^\dag(B^\top\eta+D^\top\z+D^\top P\si+\rho) \in L_\dbF^2(t,T;\dbR^m).
      \end{align}
\end{enumerate}
In this case, the closed-loop optimal strategy $(\Th^*(\cd),v^*(\cd))$ admits the following representation:
\begin{align}
\label{SLQ:barTh}
\Th^* &=\hat\Th+\big[I-(R+D^\top PD)^\dag(R+ D^\top PD)\big]\Pi,\\
\label{SLQ:barv}
v^* &=\hat v +\big[I-(R+D^\top PD)^\dag(R+ D^\top PD)\big]\pi,
\end{align}
where $(\Pi(\cd),\pi(\cd))\in L^2(t,T;\dbR^{m\times n})\times L_\dbF^2(t,T;\dbR^m)$ is arbitrary.
\end{theorem}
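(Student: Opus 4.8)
The plan is to reduce both implications to a single \emph{completion-of-squares} identity. Suppose $P(\cd)\in C([t,T];\dbS^n)$ solves the Riccati equation \rf{GRE}, let $(\eta(\cd),\z(\cd))$ be the adapted solution of the BSDE \rf{eta-beta}, and abbreviate $\Lambda=R+D^\top PD$ and $M=B^\top P+D^\top PC+S$, so that $\hat\Th=-\Lambda^\dag M$. First I would apply It\^o's formula to $s\mapsto\lan P(s)X(s),X(s)\ran+2\lan\eta(s),X(s)\ran$ along the trajectory $X(\cd)$ driven by an arbitrary control $u(\cd)\in\cU[t,T]$, then use \rf{GRE} to eliminate the quadratic drift and \rf{eta-beta} to eliminate the linear drift. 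Invoking the range conditions \rf{range-inclusion} and \rf{SLQ:eta-z-condition-1} (which give $\Lambda\Lambda^\dag M=M$ and $\Lambda\Lambda^\dag(B^\top\eta+D^\top\z+D^\top P\si+\rho)=B^\top\eta+D^\top\z+D^\top P\si+\rho$) to recombine the terms that are linear in $u$, I expect to arrive at
$$ J(t,x;u(\cd))=\lan P(t)x,x\ran+2\lan\eta(t),x\ran+c_0+\dbE\int_t^T\big\lan\Lambda\,(u-\hat\Th X-\hat v),\,u-\hat\Th X-\hat v\big\ran ds, $$
where $c_0$ collects the terms in $b,\si,q,\rho$ and is independent of $u(\cd)$. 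This identity drives everything that follows.

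For the sufficiency, assume (i)--(ii) and define $(\Th^*,v^*)$ by \rf{SLQ:barTh}--\rf{SLQ:barv}. With $X^*$ the solution of \rf{closed-syst*} and outcome $u^*=\Th^*X^*+v^*$, one has $u^*-\hat\Th X^*-\hat v=[I-\Lambda^\dag\Lambda](\Pi X^*+\pi)$, and since $\Lambda[I-\Lambda^\dag\Lambda]=0$ the integral in the identity vanishes; thus $J(t,x;u^*)=\lan P(t)x,x\ran+2\lan\eta(t),x\ran+c_0$. For every other $u(\cd)\in\cU[t,T]$ the positivity \rf{positivity} makes the integrand nonnegative, so $J(t,x;u^*)\les J(t,x;u(\cd))$ for all $x$, which is exactly the optimality criterion \rf{closed-optimal}. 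The representation \rf{SLQ:barTh}--\rf{SLQ:barv} then follows by running the argument in reverse: a strategy is optimal iff its outcome kills the quadratic integrand for all $x$, which by $\Lambda\ges0$ forces $\Lambda(\Th^*-\hat\Th)=0$ and $\Lambda(v^*-\hat v)=0$ a.e.; because $I-\Lambda^\dag\Lambda$ is the orthogonal projection onto $\ker\Lambda$, this is equivalent to the stated form with $(\Pi,\pi)$ arbitrary.

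For the necessity, assume a closed-loop optimal strategy exists on $[t,T]$. I would first observe that it restricts to a closed-loop optimal strategy on every subinterval $[s,T]$, so the homogeneous value function takes the form $V^0(s,x)=\lan P(s)x,x\ran$ for some symmetric matrix-valued $P(\cd)$. The task is then to show that this $P(\cd)$ is (absolutely) continuous and solves \rf{GRE}, that the second-order necessary condition for optimality yields the positivity \rf{positivity}, and that the admissibility (square-integrability) of the optimal feedback forces the range inclusion \rf{range-inclusion} together with \rf{L2-integrability}. Subtracting the homogeneous contribution by means of the identity above reduces the inhomogeneous problem to the linear BSDE \rf{eta-beta}, and the same admissibility/range reasoning produces \rf{SLQ:eta-z-condition-1}--\rf{SLQ:eta-z-condition-2}.

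The hard part will be the necessity direction, and specifically showing that the coefficient $P(\cd)$ of the homogeneous value function is differentiable and is a \emph{regular} solution of \rf{GRE} --- in particular, verifying the pointwise range inclusion \rf{range-inclusion} and the positivity \rf{positivity} almost everywhere. I expect this to require a perturbation/approximation argument (e.g.\ replacing $R$ by $R+\e I$ to obtain a standard, uniquely solvable problem, deriving the corresponding smooth Riccati solution, and passing to the limit as $\e\downarrow0$ while keeping the associated feedback bounded in $L^2$), with the analogous range condition \rf{SLQ:eta-z-condition-1} for the BSDE data being the second delicate point.
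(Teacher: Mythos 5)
First, a point of context: the paper does not prove this theorem at all --- it is quoted verbatim from Sun--Li--Yong \cite{Sun-Li-Yong 2016} ("For proofs and full discussion of these results, we refer the reader to..."), so there is no in-paper proof to match your proposal against; I am comparing it with the argument in that reference.

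Your sufficiency direction is essentially the standard and correct argument: applying It\^o's formula to $\lan P(s)X(s),X(s)\ran+2\lan\eta(s),X(s)\ran$, using \rf{GRE}, \rf{eta-beta} and the two range conditions to obtain $J(t,x;u(\cd))=\dbE\big[\lan P(t)x,x\ran+2\lan\eta(t),x\ran\big]+c_0+\dbE\int_t^T\lan\L(u-\hat\Th X-\hat v),u-\hat\Th X-\hat v\ran ds$ with $\L=R+D^\top PD$, and then reading off optimality from $\L\ges0$. (Two small gaps there: the cross term in $u$ only completes the square because the BSDE drift carries the $\hat\Th^\top\rho$ and $(C+D\hat\Th)^\top P\si$ terms, which you should check explicitly; and deducing $\L(\Th^*-\hat\Th)=0$, $\L(v^*-\hat v)=0$ separately from the vanishing of the integrand requires varying $x$ and using the invertibility of the fundamental matrix of the closed-loop system.) The genuine gap is the necessity direction, which is the hard half and which your proposal leaves as a list of intentions ("the task is then to show\dots", "I expect this to require\dots"). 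The perturbation $R\mapsto R+\e I$ you suggest there is not the right tool for this implication --- it is exactly the device the present paper uses for \emph{open-loop} solvability, where the limit feedback is in general only locally square-integrable on $[t,T)$, and nothing in that scheme by itself delivers differentiability of $P$, the a.e.\ range inclusion \rf{range-inclusion}, or the global $L^2$ bound \rf{L2-integrability}. The argument in \cite{Sun-Li-Yong 2016} is different and more direct: given a closed-loop optimal strategy $(\Th^*,v^*)$, define $P$ as the solution of the (linear, hence solvable) Lyapunov equation associated with $(A+B\Th^*,C+D\Th^*)$ and the running cost along $\Th^*$; use the fact that the outcome is open-loop optimal for \emph{every} $x$, so the FBSDE stationarity condition holds, and identify the adjoint pair as $Y=PX+\eta$, $Z=P(C+D\Th^*)X+P\si+\z$ to obtain the pointwise identity $(R+D^\top PD)\Th^*+B^\top P+D^\top PC+S=0$ a.e. This identity simultaneously yields the range inclusion, converts the Lyapunov equation into \rf{GRE}, and gives $\hat\Th=(R+D^\top PD)^\dag(R+D^\top PD)\Th^*$, hence $|\hat\Th|\les|\Th^*|$ and \rf{L2-integrability}; the positivity \rf{positivity} then follows from the necessary convexity $J^0(s,0;u(\cd))\ges0$ combined with the completed-square identity and a localization in $s$. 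Without some version of these steps your proof of necessity is not complete.
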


\begin{theorem}\label{thm:SLQ-ccloop-kehua}
Let {\rm\ref{ass:A1}--\ref{ass:A2}} hold.
\begin{enumerate}[\rm(i)]
\item Suppose Problem {\rm(SLQ)} is open-loop solvable. Then $J^0(0,0;u(\cd))\ges 0$ for all $u(\cd)\in \cU[t,T]$.
\item Suppose that there exists a constant $\l>0$ such that
      $$ J^0(0,0;u(\cd))\ges\l\dbE\int_0^T|u(s)|^2ds, \q\forall u(\cd)\in \cU[t,T]. $$
      Then the Riccati equation \rf{GRE} admits a unique solution $P(\cd)\in C([0,T];\dbS^n)$ such that
      $$ R(s)+D(s)^\top P(s)D(s)\ges \l I_m, \q\ae~s\in[0,T].$$
      Consequently, Problem {\rm(SLQ)} is uniquely closed-loop solvable and hence uniquely open-loop solvable.
      The unique closed-loop optimal strategy is given by
      $$\begin{aligned}
        \Th^* &= -\big(R+D^\top PD\big)^{-1}\big(B^\top P+D^\top PC+S\big),\\
          v^* &= -\big(R+D^\top PD\big)^{-1}\big(B^\top\eta+D^\top\z+D^\top P\si+\rho\big),
      \end{aligned}$$
      where $(\eta(\cd),\z(\cd))$ is the adapted solution to the BSDE \rf{eta-beta};
      and the unique open-loop optimal control of Problem {\rm(SLQ)} for the initial pair $(t,x)$ is given by
      $$ u^*(s) = \Th^*(s)X^*(s)+v^*(s), \q s\in[t,T],$$
      where $X^*(\cd)$ is the solution to the corresponding closed-loop system \rf{closed-syst*}.
\end{enumerate}
\end{theorem}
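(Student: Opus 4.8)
The plan is to treat the two parts separately: Part (i) is a soft convexity-attainment argument, while Part (ii) reduces, once the Riccati equation has been solved, to the closed-loop characterization of \autoref{thm:SLQ-cloop-kehua}.

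For Part (i), I would use that $u(\cd)\mapsto J(0,0;u(\cd))$ is a quadratic functional on the Hilbert space $\cU[0,T]$ whose purely second-order part is exactly $J^0(0,0;\cd)$. Writing $X(\cd\,;0,0,u(\cd))=X_0(\cd)+X_u(\cd)$, where $X_0$ is the response to the nonhomogeneous data with $u\equiv0$ and $X_u$ is the homogeneous response to $u$ from the zero initial state, substitution into \rf{cost} gives $J(0,0;u)=J^0(0,0;u)+(\hbox{affine in }u)$. Since Problem (SLQ) is open-loop solvable at every initial pair, in particular at $(0,0)$, there is a minimizer $\bar u$. Perturbing it as $\bar u+\th v$ yields $J(0,0;\bar u+\th v)-J(0,0;\bar u)=2\th\,\ell(v)+\th^2J^0(0,0;v)$ for a linear functional $\ell$; as this is nonnegative for all $\th$ of either sign, first $\ell(v)=0$ and then $J^0(0,0;v)\ges0$ for every $v\in\cU[0,T]$.

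For Part (ii), I would first propagate the uniform convexity from $t=0$ to arbitrary $t\in[0,T)$: given $u(\cd)\in\cU[t,T]$, extend it by zero on $[0,t)$; since the homogeneous state issued from the origin stays at the origin while the control vanishes, $J^0(0,0;\ti u)=J^0(t,0;u)$ with equal $L^2$-norms, whence $J^0(t,0;u)\ges\l\,\dbE\int_t^T|u(s)|^2ds$. Thus $u\mapsto J(t,x;u)$ is, for every $(t,x)$, a continuous, coercive, strictly convex quadratic functional on $\cU[t,T]$ and has a unique minimizer, giving unique open-loop solvability. The substantive work is then to build the Riccati solution. I would show the value function of the homogeneous problem is a quadratic form $V^0(t,x)=\lan P(t)x,x\ran$ with $P(t)\in\dbS^n$ (homogeneity gives $V^0(t,\a x)=\a^2V^0(t,x)$, and the parallelogram identity upgrades this to a genuine quadratic form), that $0\les V^0(t,x)\les J^0(t,x;0)\les C|x|^2$ so that $P(\cd)$ is uniformly bounded, and that $P(\cd)$ solves \rf{GRE}. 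Because the invertibility of $R+D^\top PD$, the global existence of $P$, and the coercivity bound are intertwined, I would construct $P$ as the limit of Riccati solutions $P_\e$ of the regularized problems with control weight $R+\e I_m$ (for which invertibility and solvability are standard), and pass to the limit using the uniform bound to obtain $P\in C([0,T];\dbS^n)$; uniqueness of $P$ follows from uniqueness of the value function.

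The hard part is the coercivity estimate $R+D^\top PD\ges\l I_m$ together with the absence of finite-time blow-up. To get the estimate I would use the completion-of-squares identity, valid along $P$ while $R+D^\top PD$ is invertible: for the homogeneous state $X$ from $(t,0)$ under any control $u$,
$$J^0(t,0;u)=\dbE\int_t^T\lan(R+D^\top PD)(u-\h\Th X),u-\h\Th X\ran ds,\qquad \h\Th=-(R+D^\top PD)^{-1}(B^\top P+D^\top PC+S).$$
Fixing a Lebesgue point $\t$ and $\xi\in\dbR^m$, I would solve the closed-loop system driven by the pulse $\xi\mathbf 1_{[\t,\t+h]}$ and set $u=\h\Th X+\xi\mathbf 1_{[\t,\t+h]}$, so that $u-\h\Th X=\xi\mathbf 1_{[\t,\t+h]}$; the right-hand side above becomes $\int_\t^{\t+h}\lan(R+D^\top PD)\xi,\xi\ran ds$, while uniform convexity bounds the left-hand side below by $\l\,\dbE\int_\t^{\t+h}|\h\Th X+\xi|^2ds$. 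Since $X\to0$ as the pulse shrinks, dividing by $h$ and letting $h\downarrow0$ gives $\lan(R+D^\top PD)(\t)\xi,\xi\ran\ges\l|\xi|^2$; this bound in turn keeps $R+D^\top PD$ uniformly invertible and, with the a priori bound on $P$, prevents blow-up. Once $R+D^\top PD\ges\l I_m>0$ is established, \autoref{thm:SLQ-cloop-kehua} applies with the Moore--Penrose inverse replaced by the genuine inverse: the range conditions \rf{range-inclusion} and \rf{SLQ:eta-z-condition-1} hold trivially, the quantities in \rf{L2-integrability} and \rf{SLQ:eta-z-condition-2} are automatically square-integrable, and the free terms in \rf{SLQ:barTh}--\rf{SLQ:barv} vanish because $I-(R+D^\top PD)^\dag(R+D^\top PD)=0$. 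This delivers the stated formulas for $\Th^*$ and $v^*$, closed-loop solvability on every $[t,T]$, and the feedback representation $u^*=\Th^*X^*+v^*$, with uniqueness inherited from the strict convexity proved above.
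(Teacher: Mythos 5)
First, a point of comparison: the paper does not prove this theorem at all --- it is quoted as an existing result and the reader is referred to \cite{Sun-Li-Yong 2016} for the proof --- so there is no in-paper argument to measure yours against, and your proposal has to stand on its own. Part (i) does: decomposing the state from $(0,0)$ into the homogeneous response to $u$ plus the response to the nonhomogeneous data gives $J(0,0;u)=J^0(0,0;u)+(\hbox{affine in }u)$, and the existence of a minimizer forces the purely quadratic part to be nonnegative. That is correct and standard. Your propagation of uniform convexity from $t=0$ to arbitrary $t$ by zero-extension, and the resulting unique open-loop solvability, are also fine.

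The gap is in the construction of $P$ in part (ii). You propose to obtain $P$ as the limit of the Riccati solutions $P_\e$ for the problems with control weight $R+\e I_m$, "for which invertibility and solvability are standard." They are not: the classical global solvability theory for \rf{Ric} needs the full set of sign conditions \rf{standard-condition} (in particular $G\ges0$ and $Q-S^\top R^{-1}S\ges0$), and none of these is available here --- $Q$, $S$, $G$ remain indefinite, and a Riccati equation with $R\ges\e I$ but indefinite $Q$ can blow up in finite time. Global existence of $P_\e$ on $[0,T]$ with $R+\e I_m+D^\top P_\e D>0$ is precisely statement (ii) applied to Problem (SLQ)$_\e$, whose cost is uniformly convex with constant $\l+\e$; so this step assumes the theorem being proved. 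Your alternative sketch --- continuation from $T$, with the completion-of-squares identity and the pulse control $u=\h\Th X+\xi\mathbf 1_{[\t,\t+h]}$ delivering $R+D^\top PD\ges\l I_m$, which together with the a priori bound on $P$ prevents blow-up --- is the right skeleton, but it is missing its base case: to start the continuation you need local existence of \rf{GRE} near $T$, hence invertibility of $R+D^\top GD$ there, which is not assumed and must itself be extracted from uniform convexity (for instance via a pulse argument against the \emph{uncontrolled} backward transport $P_0$ of $G$, which satisfies $R+D^\top P_0D\ges\l I_m$ and agrees with $G$ at $T$). Note that the uncorrected pulse only yields $R+D^\top P_0D\ges\l I_m$ with $P_0\ges P$, which is the wrong direction for concluding $R+D^\top PD\ges\l I_m$; the corrected pulse you use fixes this but presupposes that $P$ already solves the Riccati equation on the interval in question, which is exactly why the continuation structure, with its base case, cannot be bypassed. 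Finally, a smaller error: $V^0(t,x)\ges0$ does not follow from uniform convexity of $u\mapsto J^0(t,0;u)$ (with indefinite $Q$ or $G$ the value at $x\ne0$ can be negative); the two-sided bound $|P(t)|\les C$ that you actually need follows instead from coercivity, $V^0(t,x)\ges J^0(t,x;0)-\l^{-1}\|\ell_x\|^2\ges -C|x|^2$, where $\ell_x$ is the cross term between $x$ and $u$. Once these points are repaired the final reduction to Theorem \ref{thm:SLQ-cloop-kehua} with the Moore--Penrose inverse replaced by the true inverse is correct.
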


\section{A Perturbation Approach to Open-Loop Solvability}\label{Sec:3}

We begin by assuming
\bel{cost-convex} J^0(0,0;u(\cd))\ges0, \q\forall u(\cd)\in\cU[t,T],\ee
which, according to \autoref{thm:SLQ-ccloop-kehua} (i), is necessary for the open-loop solvability of Problem (SLQ). Condition \rf{cost-convex} means that $u(\cd)\mapsto J^0(0,0;u(\cd))$ is convex. One can actually prove that \rf{cost-convex} implies the convexity of the mapping $u(\cd)\mapsto J(t,x;u(\cd))$ for any choice of $(t,x)$ (\cite{Sun-Li-Yong 2016}).
Consider, for each $\e>0$, the LQ problem of minimizing the perturbed cost functional
\begin{eqnarray}\label{cost-e}
J_\e(t,x;u(\cd))&\3n\deq\3n& J(t,x;u(\cd))+\e\dbE\int_t^T|u(s)|^2ds, \nn\\
            &\3n=\3n& \dbE\big\{\lan GX(T),X(T)\ran + 2\lan g,X(T)\ran \nn\\
            &\3n~\3n& +\int_t^T\Bigg[\llan\begin{pmatrix}Q (s)& S(s)^\top \\ S(s) & R(s)+\e I_m\end{pmatrix}
                                          \begin{pmatrix}X(s) \\ u(s)\end{pmatrix},
                                          \begin{pmatrix}X(s) \\ u(s)\end{pmatrix}\rran \nn\\
            &\3n~\3n& +2\llan\begin{pmatrix}q(s) \\ \rho(s)\end{pmatrix},
                             \begin{pmatrix}X(s) \\ u(s)\end{pmatrix}\rran\Bigg]ds\Bigg\}
\end{eqnarray}
subject to the state equation \rf{state}.
We denote this perturbed LQ problem by Problem (SLQ)$_\e$ and its value function by $V_\e(\cd\,,\cd)$. Notice that the cost functional $J^0_\e(t,x;u(\cd))$ of the homogeneous LQ problem associated with Problem (SLQ)$_\e$ is
$$J^0_\e(t,x;u(\cd))=J^0(t,x;u(\cd))+\e\dbE\int_t^T|u(s)|^2ds,$$
which, by \rf{cost-convex}, satisfies
$$ J^0_{\e}(0,0;u(\cd))\ges\e\dbE\int_0^T|u(s)|^2ds, \q\forall u\in\cU[t,T].$$
It follows from \autoref{thm:SLQ-ccloop-kehua} that the Riccati equation
\bel{Ric-e}\left\{\begin{aligned}
  & \dot P_\e + P_\e A + A^\top P_\e + C^\top P_\e C + Q \\
  & \hp{\dot P_\e} -(P_\e B+C^\top P_\e D+S^\top)(R+\e I_m+D^\top P_\e D)^{-1}(B^\top P_\e +D^\top P_\e C+S)=0, \\
  & P_\e(T)=G
\end{aligned}\right.\ee
associated with Problem (SLQ)$_\e$ has a unique solution $P_\e(\cd)\in C([0,T];\dbS^n)$ such that
$$ R(s)+\e I_m+D(s)^\top P_\e(s)D(s) \ges \e I_m, \q\ae~s\in[0,T]. $$
Furthermore, let $(\eta_\e(\cd),\z_\e(\cd))$ be the adapted solution to the BSDE 
\bel{eta-zeta-e}\left\{\begin{aligned}
  d\eta_\e(s) &=-\big[(A+B\Th_\e)^\top\eta_\e(s)+(C+D\Th_\e)^\top\z_\e(s) + (C+D\Th_\e)^\top P_\e\si \\
              &\hp{=-\big[} +\Th_\e^\top\rho + P_\e b +q\big]ds + \z_\e(s) dW(s), \q s\in[0,T],\\
   \eta_\e(T) &=g,
\end{aligned}\right.\ee
and let $X_\e(\cd)$ be the solution to the closed-loop system

\bel{Xe}\left\{\begin{aligned}
   dX_\e(s) &= [(A+B\Th_\e)X_\e(s)+Bv_\e+b]ds \\
            &\hp{=\ } +[(C+D\Th_\e)X_\e(s)+Dv_\e+\si]dW(s), \q s\in[t,T],\\
    X_\e(t) &= x,
\end{aligned}\right.\ee
where $\Th_\e(\cd):[0,T]\to\dbR^{m\times n}$ and $v_\e(\cd):[0,T]\times\Om\to\dbR^m$ are defined by
\begin{align}
\label{Th-e}
\Th_\e &= -(R+\e I_m+D^\top P_\e D)^{-1}(B^\top P_\e+D^\top P_\e C+S), \\
\label{v-e}
v_\e &= -(R+\e I_m+D^\top P_\e D)^{-1}(B^\top\eta_\e+D^\top\z_\e+D^\top P_\e\si+\rho).
\end{align}
Then the unique open-loop optimal control of Problem (SLQ)$_\e$ for the initial pair $(t,x)$ is given by
\bel{u-e} u_\e(s) = \Th_\e(s)X_\e(s) + v_\e(s), \q s\in[t,T]. \ee

\ss

Now we present the main result of this section, which provides a characterization of the open-loop
solvability of Problem (SLQ) in terms of the family $\{u_\e(\cd)\}_{\e>0}$.

\begin{theorem}\label{thm:SLQ-oloop-kehua}
Let {\rm\ref{ass:A1}--\ref{ass:A2}} and \rf{cost-convex} hold.
For any given initial pair $(t,x)\in[0,T)\times\dbR^n$, let $u_\e(\cd)$ be defined by \rf{u-e}, which is the outcome of the closed-loop optimal strategy $(\Th_\e(\cd),v_\e(\cd))$ of Problem {\rm(SLQ)}$_\e$. Then the following statements are equivalent:
\begin{enumerate}[\rm(i)]
\item Problem {\rm(SLQ)} is open-loop solvable at $(t,x)$;
\item the family $\{u_\e(\cd)\}_{\e>0}$ is bounded in the Hilbert space $L^2_\dbF(t,T;\dbR^m)$, i.e.,
      $$\sup_{\e>0}\,\dbE\int_t^T|u_\e(s)|^2ds <\i;$$
\item the family $\{u_\e(\cd)\}_{\e>0}$ is convergent strongly in $L^2_\dbF(t,T;\dbR^m)$ as $\e\to0$.
\end{enumerate}
Whenever {\rm(i)}, {\rm(ii)}, or {\rm(iii)} is satisifed, the family $\{u_\e(\cd)\}_{\e> 0}$ converges strongly to an open-loop
optimal control of Problem {\rm(SLQ)} for the initial pair $(t,x)$ as $\e\to0$.
\end{theorem}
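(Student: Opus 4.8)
The plan is to exploit the uniform convexity of the perturbed problems. Since \rf{cost-convex} makes $u(\cd)\mapsto J(t,x;u(\cd))$ convex and the added term $\e\,\dbE\int_t^T|u|^2ds$ makes $u(\cd)\mapsto J_\e(t,x;u(\cd))$ uniformly convex with modulus $\e$, the optimality of $u_\e(\cd)$ for Problem (SLQ)$_\e$ yields, after expanding $J_\e(t,x;u(\cd))-J_\e(t,x;u_\e(\cd))=J^0_\e(0,0;u(\cd)-u_\e(\cd))$ and using that the Gateaux derivative of $J_\e(t,x;\cd)$ vanishes at $u_\e(\cd)$, the basic inequality
\[
J_\e(t,x;u(\cd))-V_\e(t,x)\ges\e\,\dbE\int_t^T|u(s)-u_\e(s)|^2ds,\q\forall u(\cd)\in\cU[t,T].
\]
Writing $\|\cd\|$ for the norm of $\cU[t,T]=L^2_\dbF(t,T;\dbR^m)$, I would first record two elementary facts. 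From $V_\e\ges V$ and $V_\e(t,x)\les J(t,x;u(\cd))+\e\|u(\cd)\|^2$ for each fixed $u(\cd)$, one gets that $V_\e(t,x)$ is nondecreasing in $\e$ and $V_\e(t,x)\downarrow V(t,x)$ as $\e\downarrow0$. Applying the basic inequality symmetrically to the pair $u_\e(\cd),u_{\e'}(\cd)$ and cancelling the $J$-terms shows $(\e-\e')\big(\|u_{\e'}(\cd)\|^2-\|u_\e(\cd)\|^2\big)\ges0$, so $\e\mapsto\|u_\e(\cd)\|$ is nonincreasing and has a limit $L\in[0,\i]$ as $\e\downarrow0$.

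For (i)$\Rightarrow$(ii): if $\bar u(\cd)$ is open-loop optimal then $J(t,x;\bar u(\cd))=V(t,x)$, and since $J(t,x;u_\e(\cd))\ges V(t,x)$,
\[
V(t,x)+\e\|u_\e(\cd)\|^2\les J_\e(t,x;u_\e(\cd))\les J_\e(t,x;\bar u(\cd))=V(t,x)+\e\|\bar u(\cd)\|^2,
\]
whence $\|u_\e(\cd)\|\les\|\bar u(\cd)\|$, a bound independent of $\e$. The same computation gives the sharper $\|u_\e(\cd)\|\les\|v(\cd)\|$ for \emph{every} open-loop optimal $v(\cd)$. The implication (iii)$\Rightarrow$(ii) is trivial.

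The heart is (ii)$\Rightarrow$(iii). Under boundedness $\e\|u_\e(\cd)\|^2\to0$, so $J(t,x;u_\e(\cd))=V_\e(t,x)-\e\|u_\e(\cd)\|^2\to V(t,x)$; thus $\{u_\e(\cd)\}$ is a bounded minimizing sequence for $J(t,x;\cd)$. Extracting $u_{\e_k}(\cd)\rightharpoonup u^*(\cd)$ and using the weak lower semicontinuity of the convex continuous functional $J(t,x;\cd)$ gives $J(t,x;u^*(\cd))\les\liminf_k J(t,x;u_{\e_k}(\cd))=V(t,x)$, so $u^*(\cd)$ is open-loop optimal; this already establishes (ii)$\Rightarrow$(i). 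Let $u^\dagger(\cd)$ be the minimal-norm open-loop optimal control, i.e. the projection of $0$ onto the (now nonempty) closed convex set of optimal controls. Applying the basic inequality with $u(\cd)=u^\dagger(\cd)$, and using $J_\e(t,x;u^\dagger(\cd))=V(t,x)+\e\|u^\dagger(\cd)\|^2$ together with $V_\e(t,x)\ges V(t,x)+\e\|u_\e(\cd)\|^2$, yields the crucial inequality
\[
\dbE\int_t^T|u_\e(s)-u^\dagger(s)|^2ds\les\dbE\int_t^T|u^\dagger(s)|^2ds-\dbE\int_t^T|u_\e(s)|^2ds.
\]
In particular $\|u_\e(\cd)\|\les\|u^\dagger(\cd)\|$, so $L\les\|u^\dagger(\cd)\|^2$; and since any weak cluster point of $\{u_\e(\cd)\}$ is optimal with norm $\ges\|u^\dagger(\cd)\|$ yet $\les\sqrt L$, the minimality of $u^\dagger(\cd)$ forces $L=\|u^\dagger(\cd)\|^2$. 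Feeding this back, the right-hand side above tends to $0$, so the \emph{whole} family converges strongly, $u_\e(\cd)\to u^\dagger(\cd)$ in $\cU[t,T]$. Finally (iii)$\Rightarrow$(i) and the last assertion: if $u_\e(\cd)\to u^*(\cd)$ strongly then $J(t,x;u_\e(\cd))\to J(t,x;u^*(\cd))$ by continuity while $J(t,x;u_\e(\cd))\to V(t,x)$, so the limit is open-loop optimal.

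The main obstacle is exactly the final step of (ii)$\Rightarrow$(iii): upgrading the easy weak-subsequential convergence to strong convergence of the entire family. The decisive device is the crucial inequality, which reduces strong convergence to the scalar statement $\|u_\e(\cd)\|\to\|u^\dagger(\cd)\|$; the monotonicity of $\e\mapsto\|u_\e(\cd)\|$ and the identification of the limiting norm with that of the minimal-norm optimal control then close the argument with no residual subsequence, which is precisely the sharpening over \cite[Theorem 6.2]{Sun-Li-Yong 2016}.
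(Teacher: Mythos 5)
Your proof is correct, and for the decisive implication (ii)$\Rightarrow$(iii) it takes a genuinely different route from the paper. The parts you share with the paper are (i)$\Rightarrow$(ii) (the two-sided estimate $V(t,x)+\e\|u_\e\|^2\les V_\e(t,x)\les V(t,x)+\e\|v\|^2$ for any open-loop optimal $v$), (ii)$\Rightarrow$(i) (bounded minimizing family, weak sequential lower semicontinuity of the continuous convex functional $J(t,x;\cd)$, and $V_\e\to V$), and the trivial (iii)$\Rightarrow$(ii). For (ii)$\Rightarrow$(iii) the paper argues in two stages: it first proves that all weak subsequential limits coincide by averaging two putative limits $u_1^*,u_2^*$, applying the norm bound with $v=(u_1^*+u_2^*)/2$, and invoking the parallelogram law; it then upgrades weak convergence of the whole family to strong convergence by combining $\|u_\e\|\les\|u^*\|$ with weak lower semicontinuity of the norm. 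You instead exploit the uniform convexity of $J_\e(t,x;\cd)$ through the quadratic expansion at the minimizer, $J_\e(t,x;u)-V_\e(t,x)=J^0_\e(0,0;u-u_\e)\ges\e\|u-u_\e\|^2$, which with $u=u^\dagger$ (the minimal-norm optimal control, well defined as the projection of $0$ onto the nonempty closed convex set of optimal controls) gives the quantitative inequality $\|u_\e-u^\dagger\|^2\les\|u^\dagger\|^2-\|u_\e\|^2$; the monotonicity of $\e\mapsto\|u_\e\|$ and the identification of the limiting norm with $\|u^\dagger\|$ then finish the argument. Your route buys an explicit identification of the limit as the minimal-norm optimal control together with a rate-type estimate, at the price of invoking the quadratic-form representation of the cost in $u$ (needed for the vanishing Gateaux derivative step), a fact the paper does not need to restate since its argument uses only elementary value-function comparisons plus the parallelogram law. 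Both arguments are complete; yours is a legitimate sharpening of the same perturbation idea.
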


To prove \autoref{thm:SLQ-oloop-kehua}, we need the following lemma.

\begin{lemma}
Let {\rm\ref{ass:A1}--\ref{ass:A2}} hold. Then for any initial pair $(t,x)\in[0,T)\times\dbR^n$,
\bel{Ve-goto-V} \lim_{\e\downarrow0}V_\e(t,x)=V(t,x). \ee
\end{lemma}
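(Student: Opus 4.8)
The plan is to sandwich $V_\e(t,x)$ between two bounds that both collapse to $V(t,x)$ as $\e\downarrow0$, using nothing more than the explicit relation between the perturbed and unperturbed cost functionals recorded in \rf{cost-e}, namely
$$J_\e(t,x;u(\cd))=J(t,x;u(\cd))+\e\,\dbE\int_t^T|u(s)|^2ds,\qq\forall u(\cd)\in\cU[t,T].$$
From this identity I read off immediately that, for each fixed $u(\cd)$, the map $\e\mapsto J_\e(t,x;u(\cd))$ is nondecreasing and dominates $J(t,x;u(\cd))$, while the gap $\e\,\dbE\int_t^T|u(s)|^2ds$ tends to $0$ as $\e\downarrow0$.

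First I would establish the lower bound. Since $J_\e(t,x;u(\cd))\ges J(t,x;u(\cd))$ for every $u(\cd)\in\cU[t,T]$, taking the infimum over $\cU[t,T]$ on both sides gives $V_\e(t,x)\ges V(t,x)$ for all $\e>0$, whence $\liminf_{\e\downarrow0}V_\e(t,x)\ges V(t,x)$. I note in passing that each Problem {\rm(SLQ)}$_\e$ is uniquely solvable by \autoref{thm:SLQ-ccloop-kehua} (applied with $\l=\e$), so $V_\e(t,x)$ is a finite real number; together with monotonicity in $\e$ this guarantees that $\lim_{\e\downarrow0}V_\e(t,x)$ exists in $[-\i,+\i)$, and that $V(t,x)\les V_\e(t,x)<\i$.

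Next I would establish the upper bound. Fix a single arbitrary competitor $u(\cd)\in\cU[t,T]$. By the definition of the value function $V_\e(t,x)\les J_\e(t,x;u(\cd))$ for every $\e>0$, and letting $\e\downarrow0$ in the identity above (the perturbation term vanishing) gives $\limsup_{\e\downarrow0}V_\e(t,x)\les J(t,x;u(\cd))$. Since this holds for every $u(\cd)\in\cU[t,T]$, taking the infimum of the right-hand side over $\cU[t,T]$ yields $\limsup_{\e\downarrow0}V_\e(t,x)\les\inf_{u(\cd)}J(t,x;u(\cd))=V(t,x)$. Combining this with the lower bound of the preceding paragraph produces \rf{Ve-goto-V}.

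There is no genuinely hard step: the statement is a soft consequence of the fact that the perturbation is monotone and vanishing, and the argument stays valid even should $V(t,x)=-\i$ (in which case both sides equal $-\i$). The only point that deserves care is a pitfall to avoid, namely the temptation to extract a single near-minimizer $u_\e(\cd)$ and pass it to the limit: the family $\{u_\e(\cd)\}$ need not converge without the additional hypotheses of \autoref{thm:SLQ-oloop-kehua}. This is precisely why the upper bound must be obtained by freezing an arbitrary competitor $u(\cd)$ \emph{before} sending $\e\downarrow0$, and only afterwards optimizing over $\cU[t,T]$.
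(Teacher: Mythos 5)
Your proof is correct and takes essentially the same sandwich argument as the paper: the lower bound is identical, and your upper bound (freeze an arbitrary competitor, let $\e\downarrow0$, then infimize) is just a reordering of quantifiers relative to the paper's choice of a $\d$-near-minimizer, with both versions covering the case $V(t,x)=-\i$. One small caveat: your parenthetical claim that $V_\e(t,x)$ is finite via \autoref{thm:SLQ-ccloop-kehua} with $\l=\e$ implicitly invokes the convexity condition \rf{cost-convex}, which is not among this lemma's hypotheses, but nothing in your main argument actually depends on that aside.
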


\begin{proof}
Let $(t,x)\in[0,T)\times\dbR^n$ be fixed. For any $\e>0$ and any $u(\cd)\in\cU[t,T]$, we have
$$ J_\e(t,x;u(\cd))=J(t,x;u(\cd))+\e\dbE\int_t^T|u(s)|^2ds \ges J(t,x;u(\cd))\ges V(t,x).$$
Taking the infimum over all $u(\cd)\in\cU[t,T]$ on the left-hand side gives
\bel{Ve<V-case1} V_\e(t,x) \ges V(t,x). \ee
On the other hand, if $V(t,x)$ is finite, then for any $\d>0$, we can find a $u^\d(\cd)\in\cU[t,T]$, independent of $\e>0$, such that $J(t,x;u^\d(\cd))\les V(t,x)+\d$. It follows that
$$ V_\e(t,x)\les J(t,x;u^\d(\cd))+\e\dbE\int_t^T|u^\d(s)|^2ds\les V(t,x)+\d+\e\dbE\int_t^T|u^\d(s)|^2ds. $$
Letting $\e\to0$, we obtain
\bel{Ve<V-case2} V_\e(t,x) \les V(t,x)+\d. \ee
Since $\d>0$ is arbitrary, we obtain \rf{Ve-goto-V} by combining \rf{Ve<V-case1} and \rf{Ve<V-case2}.
A similar argument applies to the case when $V(t,x)=-\i$.
\end{proof}

\no\it Proof of \autoref{thm:SLQ-oloop-kehua}. \rm
We begin by proving the implication  (i) $\Rightarrow$ (ii).
Let $v^*(\cd)$ be an open-loop optimal control of Problem (SLQ) for the initial pair $(t,x)$. Then for any $\e>0$,
\begin{align}\label{Ve<V+}
  V_\e(t,x) &\les J_\e(t,x;v^*(\cd))=J(t,x;v^*(\cd))+\e\dbE\int_t^T|v^*(s)|^2ds \nn\\
            &= V(t,x) + \e\dbE\int_t^T|v^*(s)|^2ds.
\end{align}
On the other hand, since $u_\e(\cd)$ is optimal for Problem (SLQ)$_\e$ with respect to $(t,x)$, we have
\begin{align}\label{Ve>V+}
  V_\e(t,x) &= J_\e(t,x;u_\e(\cd)) = J(t,x;u_\e(\cd)) + \e\dbE\int_t^T|u_\e(s)|^2ds \nn\\
            &\ges V(t,x) +\e\dbE\int_t^T|u_\e(s)|^2ds.
\end{align}
Combining \rf{Ve>V+} and \rf{Ve<V+} yields
\bel{bound-of-ue} \dbE\int_t^T|u_\e(s)|^2ds \les {V_\e(t,x)-V(t,x)\over\e} \les \dbE\int_t^T|v^*(s)|^2ds. \ee
This shows that $\{u_\e(\cd)\}_{\e>0}$ is bounded in $L^2_\dbF(t,T;\dbR^m)$.

\ms

We next show that (ii) $\Rightarrow$ (i).
Since $\{u_\e(\cd)\}_{\e>0}\subseteq L^2_\dbF(t,T;\dbR^m)$ is bounded, we can extract a sequence
$\{\e_k\}^\i_{k=1}\subseteq(0,\i)$ with $\lim_{k\to\i}\e_k=0$ such that $\{u_{\e_k}(\cd)\}$
converges weakly to some $u^*(\cd)\in L^2_\dbF(t,T;\dbR^m)$.
Note that the mapping $u(\cd)\mapsto J(t,x;u(\cd))$ is sequentially weakly lower semicontinuous because it is continuous and convex.
Then the boundedness of $\{u_{\e_k}(\cd)\}$, together with \rf{Ve-goto-V}, implies that
\begin{align*}
J(t,x;u^*(\cd))&\les \liminf_{k\to\i} J(t,x;u_{\e_k}(\cd)) \\
          &= \liminf_{k\to\i}\lt[ V_{\e_k}(t,x)-\e_k\dbE\int_t^T|u_{\e_k}(s)|^2ds \rt]= V(t,x).
\end{align*}
This means that $u^*(\cd)$ is an open-loop optimal control of Problem (SLQ) for $(t,x)$.

\ms

The implication (iii) $\Rightarrow$ (ii) is trivially true.

\ms

Finally, we prove the implication (ii) $\Rightarrow$ (iii). The proof is divided into two steps.

\ms

{\it Step 1:  The family $\{u_\e(\cd)\}_{\e>0}$ converges weakly to an open-loop optimal control of Problem {\rm(SLQ)} for the initial pair $(t,x)$ as $\e\to0$.}

\ms

To verify this, it suffices to show that every weakly convergent subsequence of $\{u_\e(\cd)\}_{\e>0}$ has the same weak limit which is an open-loop optimal control of Problem {\rm(SLQ)} for $(t,x)$.
Let $u_i^*(\cd)$; $i=1,2$, be the weak limits of two different weakly convergent subsequences $\{u_{i,\e_k}(\cd)\}_{k=1}^\infty$ $(i=1,2)$ of $\{u_\e(\cd)\}_{\e>0}$. The same argument as in the proof of (ii) $\Rightarrow$ (i) shows that both $u_1^*(\cd)$ and $u_2^*(\cd)$ are
optimal for $(t,x)$. Thus, recalling that the mapping $u(\cd)\mapsto J(t,x;u(\cd))$ is convex, we have
$$ J\lt(t,x;{u_1^*(\cd)+u_2^*(\cd)\over 2}\rt)\les {1\over2}J(t,x;u_1^*(\cd)) +{1\over2}J(t,x;u_2^*(\cd))= V(t,x). $$
This means that ${u_1^*(\cd)+u_2^*(\cd)\over 2}$ is also optimal for Problem (SLQ) with respect to $(t,x)$. Then we can repeat the argument employed in the proof of (i) $\Rightarrow$ (ii), replacing $v^*(\cd)$ by ${u_1^*(\cd)+u_2^*(\cd)\over 2}$, to obtain (see \rf{bound-of-ue})
$$ \dbE\int_t^T|u_{i,\e_k}(s)|^2ds \les \dbE\int_t^T\lt|{u_1^*(s)+u_2^*(s)\over 2}\rt|^2ds, \q i=1,2. $$
Taking inferior limits then yields
$$ \dbE\int_t^T|u_i^*(s)|^2ds \les \dbE\int_t^T\lt|{u_1^*(s)+u_2^*(s)\over 2}\rt|^2ds, \q i=1,2. $$
Adding the above two inequalities and then multiplying by $2$, we get
$$ 2\lt[\dbE\int_t^T|u_1^*(s)|^2ds+\dbE\int_t^T|u_2^*(s)|^2ds\rt] \les \dbE\int_t^T|u_1^*(s)+u_2^*(s)|^2ds, $$
or equivalently (by shifting the integral on the right-hand side to the left-hand side),
$$ \dbE\int_t^T|u_1^*(s)-u_2^*(s)|^2ds\les 0. $$
It follows that $u_1^*(\cd)=u_2^*(\cd)$, which establishes the claim.

\ms

{\it Step 2: The family $\{u_\e(\cd)\}_{\e>0}$ converges strongly as $\e\to0$. }

\ms

According to Step 1, the family $\{u_\e(\cd)\}_{\e>0}$ converges weakly to an open-loop optimal control $u^*(\cd)$ of Problem {\rm(SLQ)} for $(t,x)$ as $\e\to0$.
By repeating the argument employed in the proof of (i) $\Rightarrow$ (ii) with $u^*(\cd)$ replacing $v^*(\cd)$, we obtain
\bel{t-3-5}\dbE\int_t^T|u_\e(s)|^2ds \les \dbE\int_t^T|u^*(s)|^2ds, \q\forall\e>0.\ee
On the other hand, since $u^*(\cd)$ is the weak limit of $\{u_\e(\cd)\}_{\e>0}$, we have
$$ \dbE\int_t^T|u^*(s)|^2ds \les \liminf_{\e\to0} \dbE\int_t^T|u_\e(s)|^2ds.$$
Combining this with \rf{t-3-5}, we see that $\dbE\int_t^T|u_\e(s)|^2ds$ actually has the limit $\dbE\int_t^T|u^*(s)|^2ds$. Thus (recalling that $\{u_\e(\cd)\}_{\e>0}$ converges weakly to $u^*(\cd)$),
\begin{align*}
  &\lim_{\e\to0}\dbE\int_t^T|u_\e(s)-u^*(s)|^2ds \\
  &\q =\lim_{\e\to0}\lt[\dbE\int_t^T|u_\e(s)|^2ds + \dbE\int_t^T|u^*(s)|^2ds -2\,\dbE\int_t^T\lan u^*(s),u_\e(s)\ran ds\rt]\\
  &\q=0,
\end{align*}
which means that $\{u_\e(\cd)\}_{\e>0}$ converges strongly to $u^*(\cd)$ as $\e\to0$. $\hfill\qed$

\begin{remark}\rm
A similar result first appeared in \cite{Sun-Li-Yong 2016}, which asserts that if Problem (SLQ) is open-loop solvable at $(t,x)$, then the limit of any weakly/strongly convergent subsequence of $\{u_\e(\cd)\}_{\e>0}$ is an
open-loop optimal control for $(t,x)$. Our result sharpens that in \cite{Sun-Li-Yong 2016} by showing the family $\{u_\e(\cd)\}_{\e>0}$ itself is strongly convergent when Problem (SLQ) is open-loop solvable. This improvement has at least two advantages. First, it serves as a crucial bridge to the weak closed-loop solvability presented in the next section.
Second, it is much more convenient for computational purposes because subsequence extraction is not required.
\end{remark}

\section{Weak Closed-Loop Solvability}\label{Sec:4}

In this section, we establish the equivalence between open-loop and weak closed-loop solvabilities of Problem (SLQ). We shall show that $\Th_\e(\cd)$ and $v_\e(\cd)$ defined by \rf{Th-e} and \rf{v-e} converge locally in $[0,T)$, and that the limit pair $(\Th^*(\cd),v^*(\cd))$ is a weak closed-loop optimal strategy. We emphasize the fact that in general the limits $\Th^*(\cd)$ and $v^*(\cd)$ are merely locally square-integrable
over $[0,T)$ and cannot be obtained directly by solving the associated Riccati equation and BSDE (see Examples \ref{ex-1.1} and \ref{ex-5.1}).

\ms

We start with a simple lemma, which will enable us to work separately with $\Th_\e(\cd)$ and $v_\e(\cd)$. Recall that the associated Problem (SLQ)$^0$ is to minimize \rf{cost0} subject to \rf{state0}.

\begin{lemma}\label{lmm:SLQ-SLQ0}
Let {\rm\ref{ass:A1}--\ref{ass:A2}} hold.
If Problem {\rm(SLQ)} is open-loop solvable, then so is Problem {\rm(SLQ)$^0$}.
\end{lemma}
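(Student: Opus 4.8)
The plan is to exploit the affine structure of the problem: for a fixed initial time $t$, the map $u(\cd)\mapsto J(t,x;u(\cd))$ is a quadratic functional on the Hilbert space $\cU[t,T]$ whose purely quadratic part depends neither on the initial state $x$ nor on the nonhomogeneous data $b,\si,g,q,\rho$. Concretely, using the variation-of-constants formula for \rf{state} under \ref{ass:A1}--\ref{ass:A2}, I would record the two representations
$$J(t,x;u(\cd))=\lan\Bm u,u\ran+2\lan\Bn(t,x)+\Bv,u\ran+c(t,x),$$
$$J^0(t,x;u(\cd))=\lan\Bm u,u\ran+2\lan\Bn(t,x),u\ran+c_0(t,x),$$
where $\Bm:\cU[t,T]\to\cU[t,T]$ is a bounded self-adjoint operator determined solely by the coefficients $A,B,C,D,Q,R,S,G$, the vector $\Bn(t,x)$ depends linearly on $x$ (so $\Bn(t,0)=0$), the vector $\Bv\in\cU[t,T]$ collects the contribution of $b,\si,g,q,\rho$ and is in particular independent of $x$, and $c(t,x),c_0(t,x)$ do not involve $u(\cd)$. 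The decisive structural fact is that both functionals carry the \emph{same} operator $\Bm$ and the \emph{same} $x$-linear part $\Bn(t,x)$; they differ only through the $x$-independent shift $\Bv$ in the linear term.

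Next I would use the elementary fact that a convex quadratic functional on a Hilbert space has a point as a global minimizer if and only if its first variation vanishes there. Convexity, i.e. $\Bm\ges0$, is available under the hypothesis: since Problem (SLQ) is open-loop solvable, \autoref{thm:SLQ-ccloop-kehua}(i) gives $J^0(0,0;u(\cd))\ges0$ for all controls, and extending a control on $[t,T]$ by $0$ on $[0,t)$ (under which the homogeneous state started from $0$ with zero control remains $0$ up to time $t$) promotes this to $J^0(t,0;u(\cd))=\lan\Bm u,u\ran\ges0$ for every $t$ and every $u(\cd)\in\cU[t,T]$.

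Now fix an arbitrary initial pair $(t,x)$. By the solvability hypothesis there exist open-loop optimal controls $\bar u_x(\cd)$ for $(t,x)$ and $\bar u_0(\cd)$ for $(t,0)$, and the vanishing of the first variation reads $\Bm\bar u_x+\Bn(t,x)+\Bv=0$ and $\Bm\bar u_0+\Bv=0$ (recall $\Bn(t,0)=0$). Subtracting these two identities eliminates the nonhomogeneous shift $\Bv$ and leaves $\Bm\big(\bar u_x-\bar u_0\big)+\Bn(t,x)=0$, which is precisely the stationarity condition for $J^0(t,x;\cd)$. Since $\Bm\ges0$, stationarity together with convexity forces $\bar u_x(\cd)-\bar u_0(\cd)$ to be an open-loop optimal control of Problem (SLQ)$^0$ for $(t,x)$. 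As $(t,x)$ was arbitrary, Problem (SLQ)$^0$ is open-loop solvable.

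The bookkeeping behind the two representations and the first-variation identity is routine; equivalently one may phrase everything through the adjoint/FBSDE optimality system of \cite{Sun-Li-Yong 2016} and observe that the terms carrying $b,\si,g,q,\rho$ cancel when one subtracts the optimality systems for $(t,x)$ and $(t,0)$. The only load-bearing points are that the two problems share the operator $\Bm$ and that the nonhomogeneous data enter the linear term through a single $x$-independent vector $\Bv$; granting these, the conclusion is immediate upon subtracting the $x=0$ instance. I expect the main care to be needed in keeping the argument valid \emph{without} uniform convexity, so that $\Bm$ need not be invertible and the optimal controls $\bar u_x,\bar u_0$ need not be unique; this causes no difficulty, since the difference of the two stationarity identities is independent of which optimal controls are chosen.
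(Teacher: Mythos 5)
Your argument is correct, but it takes a genuinely different route from the paper's. You work directly with the Hilbert-space representation $J(t,x;u)=\lan\Bm u,u\ran+2\lan\Bn(t,x)+\Bv,u\ran+c(t,x)$ and the first-order optimality condition: since $J(t,x;\cd)$ and $J^0(t,x;\cd)$ share the same operator $\Bm$ and the same $x$-linear part $\Bn(t,x)$, subtracting the stationarity identities for optimal controls $\bar u_x$ at $(t,x)$ and $\bar u_0$ at $(t,0)$ cancels the $x$-independent shift $\Bv$ and exhibits $\bar u_x-\bar u_0$ as a stationary point of $J^0(t,x;\cd)$, hence a global minimizer by convexity --- which you correctly obtain from \autoref{thm:SLQ-ccloop-kehua}(i) together with the zero-extension trick, and which is exactly what is needed so that the argument survives without uniqueness or invertibility of $\Bm$. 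The paper instead stays inside its perturbation framework: it invokes \autoref{thm:SLQ-oloop-kehua} and observes that, because $v_\e$ does not depend on the initial state, the difference $X_\e^{t,x}-X_\e^{t,0}$ solves the homogeneous closed-loop SDE, so the family $u_\e=\Th_\e X_\e$ attached to Problem (SLQ)$^0$ equals $u_\e^{t,x}-u_\e^{t,0}$; boundedness of the two inhomogeneous families then gives boundedness, hence solvability, for Problem (SLQ)$^0$. The underlying idea --- subtract the $(t,0)$ instance to cancel the nonhomogeneous data --- is the same in both proofs, but you execute it on the exact optimal controls via the variational identity, whereas the paper executes it on the $\e$-approximations. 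Your version is more self-contained (it uses neither the Riccati equations nor \autoref{thm:SLQ-oloop-kehua}) and yields an explicit optimal control $\bar u_x-\bar u_0$ for the homogeneous problem; the paper's version keeps in play the objects $\Th_\e$, $v_\e$, $X_\e$ that are reused immediately afterwards in Propositions \ref{prop:limit-The} and \ref{prop:limit-ve}.
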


\begin{proof}
Let $(t,x)\in[0,T)\times\dbR^n$ be arbitrary. We note first that if $b(\cd),\si(\cd),g,q(\cd),\rho(\cd)=0$, then the adapted solution $(\eta_\e(\cd),\z_\e(\cd))$ to BSDE \rf{eta-zeta-e} is identically $(0,0)$
and hence the process $v_\e(\cd)$ defined by \rf{v-e} is identically zero.
So by \autoref{thm:SLQ-oloop-kehua}, to prove that Problem (SLQ)$^0$ is open-loop solvable at $(t,x)$ we need to verify that the family $\{u_\e(\cd)\}_{\e>0}$ is bounded in $L^2_\dbF(t,T;\dbR^m)$, with $u_\e(\cd)=\Th_\e(\cd)X_\e(\cd)$, where $X_\e(\cd)$ is the solution to the following:
\bel{Xe-5-30}\left\{\begin{aligned}
   dX_\e(s) &= (A+B\Th_\e)X_\e(s)ds + (C+D\Th_\e)X_\e(s) dW(s),\q s\in[t,T],\\
    X_\e(t) &= x.
\end{aligned}\right.\ee
To this end, we return to Problem (SLQ).
Let $v_\e(\cd)$ be defined in \rf{v-e} and denote by $X_\e^{t,x}(\cd)$ and $X_\e^{t,0}(\cd)$ the solutions to \rf{Xe}
with respect to the initial pairs $(t,x)$ and $(t,0)$, respectively.
Since Problem (SLQ) is open-loop solvable at both $(t,x)$ and $(t,0)$, by \autoref{thm:SLQ-oloop-kehua},
the families
$$ u_\e^{t,x}(\cd)\deq \Th_\e(\cd)X_\e^{t,x}(\cd)+ v_\e(\cd) \q\hb{and}\q u_\e^{t,0}(\cd) \deq \Th_\e(\cd) X_\e^{t,0}(\cd) + v_\e(\cd)$$
are bounded in $L^2_\dbF(t,T;\dbR^m)$. Note that because the process $v_\e(\cd)$ is independent of the initial state, the difference $X_\e^{t,x}(\cd)-X_\e^{t,0}(\cd)$ satisfies the same SDE as $X_\e(\cd)$.
By the uniqueness of solutions of SDEs, we must have
$$X_\e(\cd)= X_\e^{t,x}(\cd)-X_\e^{t,0}(\cd).$$
It follows that $u_\e(\cd)=u_\e^{t,x}(\cd)-u_\e^{t,0}(\cd)$. Because $\{u_\e^{t,x}(\cd)\}_{\e>0}$ and $\{u_\e^{t,0}(\cd)\}_{\e>0}$
are bounded in $L^2_\dbF(t,T;\dbR^m)$, so is $\{u_\e(\cd)\}_{\e>0}$. This implies that Problem (SLQ)$^0$ is open-loop solvable. \end{proof}

We now prove that the family $\{\Th_\e(\cd)\}_{\e>0}$ defined by \rf{Th-e} is locally convergent in $[0,T)$.

\begin{proposition}\label{prop:limit-The} Let {\rm\ref{ass:A1}--\ref{ass:A2}} hold. Suppose that Problem {\rm(SLQ)$^0$} is open-loop solvable.
Then the family $\{\Th_\e(\cd)\}_{\e>0}$ defined by \rf{Th-e} converges in $L^2(0,T';\dbR^{m\times n})$ for any $0<T'<T$; that is, there exists a locally square-integrable deterministic function $\Th^*(\cdot):[0,T)\to\dbR^{m\times n}$ such that
$$ \lim_{\e\to 0}\int_0^{T'}|\Th_\e(s)-\Th^*(s)|^2ds=0, \q\forall\, 0<T'<T. $$
\end{proposition}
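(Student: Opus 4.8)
The plan is to reduce to the homogeneous problem and then recover the \emph{deterministic} gain $\Th_\e(\cd)$ from the (random) optimal outcomes by a fundamental-matrix / Gram-matrix device. Since Problem (SLQ)$^0$ is open-loop solvable, \autoref{thm:SLQ-oloop-kehua} applies to it (here $v_\e\equiv0$, so the outcome is $\Th_\e X_\e$): for the initial time $0$ and every initial state $x$, the outcome $\Th_\e(\cd)X_\e^{0,x}(\cd)$ converges strongly in $L^2_\dbF(0,T;\dbR^m)$ as $\e\to0$. Taking $x=e_1,\dots,e_n$ to be the standard basis and letting $\Phi_\e(\cd)$ be the fundamental matrix solution of \rf{Xe-5-30} (the $\dbR^{n\times n}$-valued process whose $j$-th column is $X_\e^{0,e_j}(\cd)$, so $\Phi_\e(0)=I_n$), the matrix outcome $\BU_\e\deq\Th_\e\Phi_\e$ converges strongly in $L^2_\dbF(0,T;\dbR^{m\times n})$ to some $\BU^*$.

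Next I would upgrade this to convergence of $\Phi_\e$ itself. Writing the equation for $\Phi_\e$ in the form $d\Phi_\e=(A\Phi_\e+B\BU_\e)ds+(C\Phi_\e+D\BU_\e)dW$, $\Phi_\e(0)=I_n$, the coefficients multiplying $\Phi_\e$ are the fixed $A,C$ while the inhomogeneity $B\BU_\e,D\BU_\e$ converges strongly; hence, by the a priori estimate of \autoref{lmm:well-posedness-SDE} applied to the difference, $\Phi_\e\to\Phi^*$ in $L^2_\dbF(\Om;C([0,T];\dbR^{n\times n}))$. Now set $\cM_\e(s)\deq\dbE[\Phi_\e(s)\Phi_\e(s)^\top]$ and $N_\e(s)\deq\dbE[\BU_\e(s)\Phi_\e(s)^\top]$. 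Because $\Th_\e$ is deterministic, $\BU_\e\Phi_\e^\top=\Th_\e\Phi_\e\Phi_\e^\top$, and taking expectations yields the key identity $\Th_\e(s)\cM_\e(s)=N_\e(s)$ for a.e. $s$. Since $\Phi_\e(s)$ is a.s.\ invertible (fundamental solution of a linear SDE), $\cM_\e(s)>0$ for each $s$ and each $\e$, so $\Th_\e=N_\e\cM_\e^{-1}$. From the two convergences above one checks $\cM_\e\to\cM^*\deq\dbE[\Phi^*\Phi^{*\top}]$ uniformly on $[0,T]$, while $\{N_\e\}$ is bounded in $L^2(0,T;\dbR^{m\times n})$ (because $|N_\e(s)|^2\les\dbE|\BU_\e(s)|^2\,\dbE|\Phi_\e(s)|^2$) and Cauchy in $L^2(0,T';\dbR^{m\times n})$.

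The main obstacle is an $\e$-independent lower bound $\cM_\e(s)\ges c(T')I_n$ for $s\in[0,T']$, equivalently the positive-definiteness of $\cM^*(s)$ on $[0,T)$; this is exactly what degenerates at $s=T$ and forces the merely local conclusion. The difficulty is that the coefficient $A+B\Th_\e$ governing $\Phi_\e$ blows up as $\e\to0$, so no naive Gronwall/comparison bound on $\cM_\e$ is available. The route I would take is to exploit the \emph{closed} linear ODE satisfied by $\cM_\e$,
\[
\begin{aligned}
\dot\cM_\e=\,&A\cM_\e+\cM_\e A^\top+C\cM_\e C^\top+BN_\e+N_\e^\top B^\top\\
&+CN_\e^\top D^\top+DN_\e C^\top+D\,\dbE[\BU_\e\BU_\e^\top]\,D^\top,\qquad \cM_\e(0)=I_n,
\end{aligned}
\]
whose coefficients involve only the uniformly $L^1$-bounded quantities $N_\e$ and $\dbE[\BU_\e\BU_\e^\top]\ges0$; one passes to the limit to identify the ODE solved by $\cM^*$ and argues that $\cM^*(s)$ remains non-degenerate on $[0,T)$ because $\Phi^*$ is a.s.\ the fundamental flow of the genuine homogeneous system with locally square-integrable gain, so that $\det\Phi^*$ can vanish only at the terminal time. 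Making this uniform in $\e$ — e.g.\ via a Riccati-type differential inequality for $\cM_\e^{-1}$ using the Cauchy--Schwarz coupling $|N_\e|^2\les\tr(\cM_\e)\,\tr(\dbE[\BU_\e\BU_\e^\top])$ — is the technical heart of the argument.

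Granting $\cM_\e\ges c(T')I_n$ on $[0,T']$, the conclusion follows by a Cauchy argument. From $\Th_\e\cM_\e=N_\e$ one gets the identity $(\Th_\e-\Th_\delta)\cM_\e=(N_\e-N_\delta)+\Th_\delta(\cM_\delta-\cM_\e)$. The first term tends to $0$ in $L^2(0,T')$ by the Cauchy property of $\{N_\e\}$; the second does too, since $\sup_s|\cM_\delta-\cM_\e|\to0$ while $\{\Th_\delta=N_\delta\cM_\delta^{-1}\}$ is bounded in $L^2(0,T';\dbR^{m\times n})$ (this boundedness is where the lower bound is used). Multiplying by the now uniformly bounded $\cM_\e^{-1}$ shows $\{\Th_\e\}$ is Cauchy in $L^2(0,T';\dbR^{m\times n})$. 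Its limit is consistent across $T'$, hence defines a locally square-integrable $\Th^*:[0,T)\to\dbR^{m\times n}$ with $\int_0^{T'}|\Th_\e-\Th^*|^2ds\to0$ for every $0<T'<T$, as claimed (equivalently, $\Th^*=N^*\cM^{*-1}$ on each $[0,T']$).
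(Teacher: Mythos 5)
Your overall skeleton --- pass to the fundamental matrix $\Phi_\e$, observe that the matrix outcome $U_\e=\Th_\e\Phi_\e$ converges strongly, and then recover the deterministic gain by ``dividing'' by an averaged, invertible quantity --- is exactly the paper's strategy. But the step you yourself flag as ``the technical heart,'' namely the uniform-in-$\e$ lower bound $\cM_\e(s)\ges c(T')I_n$ on $[0,T']$, is a genuine gap, and the route you sketch for it is circular: you argue that $\cM^*(s)=\dbE[\Phi^*(s)\Phi^*(s)^\top]$ stays non-degenerate for $s<T$ ``because $\Phi^*$ is a.s.\ the fundamental flow of the homogeneous system with locally square-integrable gain'' --- but the existence of such a locally square-integrable gain $\Th^*$ is precisely the conclusion of the proposition. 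Without already knowing $U^*=\Th^*\Phi^*$ for a deterministic, locally $L^2$ function $\Th^*$, you cannot identify $\Phi^*$ as the (a.s.\ invertible) flow of a linear SDE; and the alternative of controlling $\Phi_\e^{-1}$ directly fails because its SDE involves the gain $\Th_\e$, which is exactly the quantity whose possible blow-up you are trying to control. The Riccati-type differential inequality for $\cM_\e^{-1}$ that you mention runs into the same obstruction.

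The paper closes this gap with a simpler device that your argument misses: restart the fundamental matrix at an \emph{arbitrary} time $t\in[0,T')$, i.e.\ take $\Phi_\e(t)=I_n$, and work with the \emph{first} moment $\dbE[\Phi_\e(\cd)]$ rather than the Gram matrix. Because the martingale part vanishes in expectation, $\dbE[\Phi_\e]$ solves the closed ODE $\frac{d}{ds}\dbE[\Phi_\e]=A\,\dbE[\Phi_\e]+B\,\dbE[U_\e]$, whose inhomogeneity converges in $L^2$ by H\"older, so $\dbE[\Phi_\e]\to\dbE[\Phi^*]$ uniformly on $[t,T]$ with $\dbE[\Phi^*(t)]=I_n$. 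Continuity then gives, uniformly in small $\e$, invertibility of $\dbE[\Phi_\e(s)]$ with a bounded inverse on some $[t,t+\D_t]$, and the deterministic identity $\Th_\e=\dbE[U_\e]\,\dbE[\Phi_\e]^{-1}$ yields the Cauchy property of $\{\Th_\e\}$ there; a finite covering of the compact interval $[0,T']$ finishes the proof. In short: the non-degeneracy you need is obtained locally and essentially for free by re-initializing at each $t$, rather than globally on $[0,T']$ from a single initialization at $0$. If you want to salvage your Gram-matrix version, you would still have to localize in this way, at which point the first-moment computation is both easier and sufficient.
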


\begin{proof}
We need to show that for any $0<T'<T$, the family $\{\Th_\e(\cd)\}_{\e>0}$ is Cauchy in $L^2(0,T';\dbR^{m\times n})$. To this end, let us first fix an arbitrary initial time $t\in[0,T)$ and let $\F_\e(\cd)\in L_\dbF^2(\Om;C([t,T];\dbR^{n\times n})$ be the solution to the following SDE for $\dbR^{n\times n}$-valued processes:
\bel{Phi-e}\left\{\begin{aligned}
   d\Phi_\e(s) &= (A+B\Th_\e)\Phi_\e(s)ds +(C+D\Th_\e)\Phi_\e(s)dW(s), \q s\in[t,T],\\
    \Phi_\e(t) &= I_n.
\end{aligned}\right.\ee
%
Clearly, for any initial state $x$, the solution of \rf{Xe-5-30} is given by
$$ X_\e(s) = \F_\e(s)x, \q s\in[t,T]. $$
Since Problem (SLQ)$^0$ is open-loop solvable, by \autoref{thm:SLQ-oloop-kehua}, the family
$$u_\e(s) =\Th_\e(s)X_\e(s) = \Th_\e(s)\F_\e(s)x,\q s\in[t,T]; \q \e>0$$
is strongly convergent in $L^2_\dbF(t,T;\dbR^m)$ for any $x\in\dbR^n$.
It follows that $\{\Th_\e(\cd)\F_\e(\cd)\}_{\e>0}$ converges strongly in $L^2_\dbF(t,T;\dbR^{m\times n})$ as $\e\to0$. Denote $U_\e(\cd)=\Th_\e(\cd)\F_\e(\cd)$ and let $U^*(\cd)$ be the strong limit of $U_\e(\cd)$. One sees that $\dbE[\F_\e(\cd)]$ satisfies the following ODE:
$$\left\{\begin{aligned}
    d\dbE[\Phi_\e(s)]  &= \{A(s)\dbE[\Phi_\e(s)] + B(s)\dbE[U_\e(s)]\}ds, \q s\in[t,T], \\
       \dbE[\Phi_\e(t)] &= I_n,
\end{aligned}\right.$$
and the H\"{o}lder inequality implies
$$ \int_t^T|\dbE[U_\e(s)]-\dbE[U^*(s)]|^2 ds \les \dbE\int_t^T|U_\e(s)-U^*(s)|^2 ds \to 0 \q\hb{as}\q \e\to0. $$
By the standard results of ODE, the family of continuous functions $\dbE[\F_\e(\cd)]$  converges uniformly to the solution  of
$$\left\{\begin{aligned}
    d\dbE[\Phi^*(s)]  &= \{A(s)\dbE[\Phi^*(s)] + B(s)\dbE[U^*(s)]\}ds, \q s\in[t,T], \\
       \dbE[\Phi^*(t)] &= I_n.
\end{aligned}\right.$$
Thus, by noting that $\dbE[\Phi^*(t)]=I_n$ we can choose a small constant $\D_t>0$ such that for small $\e>0$,
\begin{enumerate}[(a)]
\item $\dbE[\F_\e(s)]$ is invertible for all $s\in[t,t+\D_t]$,  and
\item $|\dbE[\F_\e(s)]|\ges {1\over2}$ for all $s\in[t,t+\D_t]$.
\end{enumerate}
We claim that the family $\{\Th_\e(\cd)\}_{\e>0}$ is Cauchy in $L^2(t,t+\D_t;\dbR^{m\times n})$. Indeed, by (a) and (b), we have
\begin{align*}
& \int_t^{t+\D_t} |\Th_{\e_1}(s)-\Th_{\e_2}(s)|^2 ds \\
&\q= \int_t^{t+\D_t} \lt|\dbE[U_{\e_1}(s)]\dbE[\Phi_{\e_1}(s)]^{-1}-\dbE[U_{\e_2}(s)]\dbE[\Phi_{\e_2}(s)]^{-1}\rt|^2ds \\
&\q\les 2\int_t^{t+\D_t} \big|\dbE[U_{\e_1}(s)-U_{\e_2}(s)]\big|^2  \big|\dbE[\Phi_{\e_1}(s)]^{-1}\big|^2 ds \\
&\q\hp{\les\ } +2\int_t^{t+\D_t} \big|\dbE[U_{\e_2}(s)]\big|^2 \big|\dbE[\Phi_{\e_1}(s)]^{-1}-\dbE[\Phi_{\e_2}(s)]^{-1}\big|^2ds  \\
&\q= 2\int_t^{t+\D_t} \big|\dbE[U_{\e_1}(s)-U_{\e_2}(s)]\big|^2  \big|\dbE[\Phi_{\e_1}(s)]^{-1}\big|^2 ds \\
&\q\hp{\les\ } +2\int_t^{t+\D_t} \big|\dbE[U_{\e_2}(s)]\big|^2 \big|\dbE[\Phi_{\e_1}(s)]^{-1}\big|^2 \big|\dbE[\Phi_{\e_2}(s)]-\dbE[\Phi_{\e_1}(s)]\big|^2 \big|\dbE[\Phi_{\e_2}(s)]^{-1}\big|^2ds  \\
&\q\les 8\int_t^{t+\D_t} \big|\dbE[U_{\e_1}(s)-U_{\e_2}(s)]\big|^2 ds \\
&\q\hp{\les\ } +32\int_t^{t+\D_t} \big|\dbE[U_{\e_2}(s)]\big|^2 ds  \cd \sup_{t\les s\les t+\D_t}\big|\dbE[\Phi_{\e_1}(s)]-\dbE[\Phi_{\e_2}(s)]\big|^2.
\end{align*}
Since $\{U_\e(\cd)\}_{\e>0}$ is Cauchy in $L^2_\dbF(t,T;\dbR^{m\times n})$ and $\{\dbE[\F_\e(\cd)]\}_{\e>0}$ converges uniformly on $[t,T]$,
the last two terms of the above inequality approach to zero as $\e_1,\e_2\to0$.

\ms

Next we use a compactness argument to prove that $\{\Th_\e(\cd)\}_{\e>0}$ is actually Cauchy in $L^2(0,T';\dbR^{m\times n})$
for any $0<T'<T$. Take any $T'\in(0,T)$. From the preceding argument we see that for each $t\in[0,T']$, there exists a small $\D_t>0$ such that $\{\Th_\e(\cd)\}_{\e>0}$ is Cauchy in $L^2(t,t+\D_t;\dbR^{m\times n})$. Since $[0,T']$ is compact, we can choose finitely many $t\in[0,T']$, say, $t_1,t_2,\ldots,t_k$, such that
$\{\Th_\e(\cd)\}_{\e>0}$ is Cauchy in each $L^2(t_j,t_j+\D_{t_j};\dbR^{m\times n})$ and $[0,T']\subseteq\bigcup_{j=1}^k[t_j,t_j+\D_{t_j}]$. It follows that
$$ \int_0^{T^\prime} |\Th_{\e_1}(s)-\Th_{\e_2}(s)|^2 ds
\les \sum_{j=1}^k\int_{t_j}^{t_j+\D_{t_j}} |\Th_{\e_1}(s)-\Th_{\e_2}(s)|^2 ds \to0 \q\hb{as}\q \e_1,\e_2\to0. $$
The proof is therefore completed.
\end{proof}

The next result shows that the family $\{v_\e(\cd)\}_{\e>0}$ defined by \rf{v-e} is also locally convergent in $[0,T)$.

\begin{proposition}\label{prop:limit-ve}
Let {\rm\ref{ass:A1}--\ref{ass:A2}} hold. Suppose that Problem {\rm(SLQ)} is open-loop solvable. Then the family $\{v_\e(\cd)\}_{\e>0}$ defined by \rf{v-e} converges in $L_\dbF^2(0,T^\prime;\dbR^m)$
for any $0<T^\prime<T$; that is, there exists a locally square-integrable process
$v^*(\cdot):[0,T)\times\Om\to\dbR^m$ such that
$$ \lim_{\e\to0}\dbE\int_0^{T^\prime}|v_\e(s)-v^*(s)|^2ds=0, \q\forall\, 0<T^\prime<T. $$
\end{proposition}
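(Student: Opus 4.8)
The plan is to reduce everything to a single convenient initial pair and then exploit the strong convergence of the outcome already provided by \autoref{thm:SLQ-oloop-kehua}. The crucial observation is that the process $v_\e(\cd)$ defined by \rf{v-e} is built only from $P_\e(\cd)$ and the adapted solution $(\eta_\e(\cd),\z_\e(\cd))$ of \rf{eta-zeta-e}, hence is completely independent of the initial pair. I would therefore fix $t=0$ and the initial state $x=0$, let $Y_\e(\cd)$ be the solution of the closed-loop system \rf{Xe} on $[0,T]$ with $Y_\e(0)=0$, and set $w_\e(\cd)\deq\Th_\e(\cd)Y_\e(\cd)+v_\e(\cd)$. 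Since Problem (SLQ) is open-loop solvable it is in particular open-loop solvable at $(0,0)$, so by \autoref{thm:SLQ-oloop-kehua} the family $\{w_\e(\cd)\}_{\e>0}$ converges strongly in $L^2_\dbF(0,T;\dbR^m)$ to some $w^*(\cd)$. Moreover, by \autoref{lmm:SLQ-SLQ0} Problem (SLQ)$^0$ is open-loop solvable, so \autoref{prop:limit-The} applies and $\Th_\e(\cd)\to\Th^*(\cd)$ in $L^2(0,T';\dbR^{m\times n})$ for every $0<T'<T$.

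The key algebraic simplification is that the feedback term collapses into the outcome: $(A+B\Th_\e)Y_\e+Bv_\e=AY_\e+Bw_\e$ and $(C+D\Th_\e)Y_\e+Dv_\e=CY_\e+Dw_\e$. Thus $Y_\e(\cd)$ solves a linear SDE whose coefficients $A,B,C,D,b,\si$ do not depend on $\e$ and which is driven only by $w_\e(\cd)$. Applying the stability estimate of \autoref{lmm:well-posedness-SDE} to $Y_\e(\cd)-Y^*(\cd)$, where $Y^*(\cd)$ solves the same equation with $w^*(\cd)$ in place of $w_\e(\cd)$, gives
$$\dbE\Big[\sup_{0\les s\les T}|Y_\e(s)-Y^*(s)|^2\Big]\les K\,\dbE\int_0^T|w_\e(s)-w^*(s)|^2ds\to0,$$
so that $Y_\e(\cd)\to Y^*(\cd)$ strongly in $L^2_\dbF(\Om;C([0,T];\dbR^n))$.

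It then remains to pass to the limit in $v_\e=w_\e-\Th_\e Y_\e$. I would write $\Th_\e Y_\e-\Th^* Y^*=\Th_\e(Y_\e-Y^*)+(\Th_\e-\Th^*)Y^*$ and estimate each piece on $[0,T']$ separately:
$$\dbE\int_0^{T'}|\Th_\e(Y_\e-Y^*)|^2ds\les\Big(\int_0^{T'}|\Th_\e|^2ds\Big)\,\dbE\Big[\sup_{0\les s\les T'}|Y_\e(s)-Y^*(s)|^2\Big],$$
$$\dbE\int_0^{T'}|(\Th_\e-\Th^*)Y^*|^2ds\les\Big(\sup_{0\les s\les T'}\dbE|Y^*(s)|^2\Big)\int_0^{T'}|\Th_\e-\Th^*|^2ds.$$
By \autoref{prop:limit-The} the quantity $\int_0^{T'}|\Th_\e|^2ds$ stays bounded while $\int_0^{T'}|\Th_\e-\Th^*|^2ds\to0$; combined with the uniform bound $\sup_{0\les s\les T'}\dbE|Y^*(s)|^2\les\dbE[\sup_{0\les s\les T}|Y^*(s)|^2]<\i$ and the strong convergence of $Y_\e(\cd)$ just established, both right-hand sides vanish as $\e\to0$. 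Hence $\Th_\e Y_\e\to\Th^* Y^*$ in $L^2_\dbF(0,T';\dbR^m)$, and therefore $v_\e\to v^*\deq w^*-\Th^* Y^*$ in $L^2_\dbF(0,T';\dbR^m)$ for every $0<T'<T$, with $v^*(\cd)$ locally square-integrable by construction.

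I expect the only delicate point to be this last limit. Because $\Th_\e(\cd)$ converges merely locally and is unbounded, the product $\Th_\e Y_\e$ cannot be treated naively; it is precisely the decoupling $(A+B\Th_\e)Y_\e+Bv_\e=AY_\e+Bw_\e$, which eliminates $\Th_\e$ from the state dynamics and leaves an $\e$-independent SDE driven by the globally convergent $w_\e$, that makes the product estimate go through after combining it with the local $L^2$-boundedness of $\Th_\e$. This mechanism also explains why convergence of $v_\e$ can only be asserted locally: the control on $\Th_\e$ furnished by \autoref{prop:limit-The} degrades as $T'\uparrow T$.
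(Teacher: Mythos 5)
Your argument is correct and follows essentially the same route as the paper's proof: both use the strong convergence of the outcome $u_\e=\Th_\e X_\e+v_\e$ from \autoref{thm:SLQ-oloop-kehua}, the stability estimate of \autoref{lmm:well-posedness-SDE} (exploiting that the closed-loop state is just the open-loop state driven by the outcome) to get convergence of the state process, and the local convergence of $\Th_\e$ from \autoref{prop:limit-The} to control the product term and recover $v_\e=u_\e-\Th_\e X_\e$. The only differences are presentational: you identify the limits explicitly and fix $x=0$, whereas the paper phrases the same estimates as a Cauchy criterion in $\e_1,\e_2$ for an arbitrary initial state at $t=0$.
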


\begin{proof}
Let $X_\e(s)$; $0\les s\les T$ be the solution to the closed-loop system \rf{Xe} with respect to initial time $t=0$.
Since Problem (SLQ) is open-loop solvable, \autoref{thm:SLQ-oloop-kehua} implies that the family
$$ u_\e(s) = \Th_\e(s)X_\e(s) + v_\e(s), \q s\in[0,T]; \q\e>0 $$
is Cauchy in $L^2_\dbF(0,T;\dbR^m)$; that is,
$$\dbE\int_0^T|u_{\e_1}(s)-u_{\e_2}(s)|^2ds \to 0  \q\hb{as}\q  \e_1,\e_2 \to0. $$
It follows by the linearity of the state equation \rf{state} and \autoref{lmm:well-posedness-SDE} that
\bel{X1-X2=go0} \dbE\lt[\sup_{0\les s\les T}|X_{\e_1}(s)-X_{\e_2}(s)|^2\rt]
\les K\dbE\int^T_0|u_{\e_1}(s)-u_{\e_2}(s)|^2ds \to0  \q\hb{as}\q  \e_1,\e_2 \to0. \ee
Now take any $0<T^\prime<T$. Since Problem (SLQ) is open-loop solvable, according to \autoref{lmm:SLQ-SLQ0} and \autoref{prop:limit-The}, the family $\{\Th_\e(\cd)\}_{\e>0}$ is Cauchy in $L^2(0,T^\prime;\dbR^{m\times n})$.
Thus, making use of \rf{X1-X2=go0}, we obtain
\begin{align*}
& \dbE\int_0^{T^\prime}|\Th_{\e_1}(s)X_{\e_1}(s)-\Th_{\e_2}(s)X_{\e_2}(s)|^2ds \\
&\q\les 2\dbE\int_0^{T^\prime}|\Th_{\e_1}(s)-\Th_{\e_2}(s)|^2|X_{\e_1}(s)|^2ds
        + 2\dbE\int_0^{T^\prime}|\Th_{\e_2}(s)|^2|X_{\e_1}(s)-X_{\e_2}(s)|^2 ds \\
&\q\les 2\int_0^{T^\prime}|\Th_{\e_1}(s)-\Th_{\e_2}(s)|^2ds\cd \dbE\left[ \sup_{0\les s\les T^\prime}|X_{\e_1}(s)|^2\right] \\
&\q\hp{\les\ } + 2\int_0^{T^\prime}|\Th_{\e_2}(s)|^2ds\cd\dbE\left[\sup_{0\les s\les T^\prime}|X_{\e_1}(s)-X_{\e_2}(s)|^2\right] \\
&\q\to 0  \q\hb{as}\q  \e_1,\e_2 \to0,
\end{align*}
and therefore
\begin{align*}
& \dbE\int_0^{T^\prime}|v_{\e_1}(s)-v_{\e_2}(s)|^2ds \\
&\q=\dbE\int_0^{T^\prime}\big|[u_{\e_1}(s)-\Th_{\e_1}(s)X_{\e_1}(s)]-[u_{\e_2}(s)-\Th_{\e_2}(s)X_{\e_2}(s)]\big|^2ds \\
&\q\les 2\dbE\int_0^{T^\prime}|u_{\e_1}(s)-u_{\e_2}(s)|^2 + 2\dbE\int_0^{T^\prime}|\Th_{\e_1}(s)X_{\e_1}(s)-\Th_{\e_2}(s)X_{\e_2}(s)|^2ds \\
&\q\to 0  \q\hb{as}\q  \e_1,\e_2 \to0.
\end{align*}
This shows that the family $\{v_\e(\cd)\}_{\e>0}$ converges in $L_\dbF^2(0,T^\prime;\dbR^m)$.
\end{proof}

We are now ready to state and prove the main result of this section, which establishes the equivalence between open-loop and weak closed-loop solvabilities of Problem (SLQ).

\begin{theorem}\label{thm:open=weak-closed}
Let {\rm\ref{ass:A1}--\ref{ass:A2}} hold. If Problem {\rm(SLQ)} is open-loop solvable, then the limit pair $(\Th^*(\cd),v^*(\cd))$ obtained in Propositions \ref{prop:limit-The} and \ref{prop:limit-ve} is a weak closed-loop optimal strategy of Problem {\rm(SLQ)} on any $[t,T)$. Consequently, the open-loop and weak closed-loop solvabilities of Problem {\rm(SLQ)} are equivalent.
\end{theorem}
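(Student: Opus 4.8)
The plan is to reduce everything to a single identification: for each fixed initial pair $(t,x)$, the outcome of the limit pair $(\Th^*(\cd),v^*(\cd))$ is precisely the strong limit $u^*(\cd)$ of the family $\{u_\e(\cd)\}$, which \autoref{thm:SLQ-oloop-kehua} already knows to be an open-loop optimal control for $(t,x)$. Once this is in hand, both the admissibility of $(\Th^*,v^*)$ as a weak closed-loop strategy and its optimality will be essentially automatic.

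The first thing I would record is that the closed-loop state $X_\e(\cd)$ solving \rf{Xe} is nothing but the \emph{open-loop} state driven by $u_\e(\cd)=\Th_\e X_\e+v_\e$: substituting $u_\e$ back into \rf{Xe} collapses $(A+B\Th_\e)X_\e+Bv_\e$ into $AX_\e+Bu_\e$, and similarly in the diffusion term. Since Problem (SLQ) is open-loop solvable at $(t,x)$, \autoref{thm:SLQ-oloop-kehua} gives $u_\e\to u^*$ strongly in $L^2_\dbF(t,T;\dbR^m)$ with $u^*$ open-loop optimal. Applying the a priori estimate of \autoref{lmm:well-posedness-SDE} to the difference of two state equations with identical coefficients then yields $X_\e\to X^*$ in $L^2_\dbF(\Om;C([t,T];\dbR^n))$, where $X^*$ is the open-loop state for $u^*$; in particular $\sup_\e\dbE[\sup_s|X_\e(s)|^2]<\i$.

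Next I would pass to the limit in the identity $u_\e=\Th_\e X_\e+v_\e$ on each subinterval $[t,T']$ with $T'<T$, where \autoref{prop:limit-The} supplies $\Th_\e\to\Th^*$ in $L^2(t,T';\dbR^{m\times n})$ and \autoref{prop:limit-ve} supplies $v_\e\to v^*$ in $L^2_\dbF(t,T';\dbR^m)$ (the hypothesis of \autoref{prop:limit-The} being met through \autoref{lmm:SLQ-SLQ0}). The only real computation is the cross term, split as $\Th_\e X_\e-\Th^* X^*=(\Th_\e-\Th^*)X_\e+\Th^*(X_\e-X^*)$: the first piece is bounded by $\big(\int_t^{T'}|\Th_\e-\Th^*|^2ds\big)\sup_s\dbE|X_\e(s)|^2$ and the second by $\big(\int_t^{T'}|\Th^*|^2ds\big)\dbE[\sup_s|X_\e-X^*|^2]$, both tending to $0$. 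Hence $\Th_\e X_\e\to\Th^* X^*$ in $L^2_\dbF(t,T';\dbR^m)$, and letting $\e\to0$ gives $u^*=\Th^* X^*+v^*$ a.e. on $[t,T']$; since $T'<T$ is arbitrary, this holds a.e. on $[t,T]$. Substituting this feedback identity back shows $X^*$ solves the weak closed-loop system \rf{weak-syst} driven by $(\Th^*,v^*)$ (uniqueness on each $[t,T']$ patches to uniqueness on $[t,T)$), with outcome $u^*\in\cU[t,T]$; as $\Th^*,v^*$ do not depend on $x$ and the argument applies at every initial state, $(\Th^*,v^*)\in\cQ_w[t,T]$. Optimality is then immediate: the outcome $u^*$ being open-loop optimal gives $J(t,x;\Th^* X^*+v^*)=V(t,x)\les J(t,x;u)$ for all $u\in\cU[t,T]$, which is exactly the criterion \rf{weak-closed-optimal*}. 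The reverse implication, weak closed-loop solvability $\Rightarrow$ open-loop solvability, is read off directly from \rf{weak-closed-optimal*}, yielding the asserted equivalence.

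I expect the main obstacle to be conceptual rather than computational: $\Th^*$ and $v^*$ are only locally square-integrable on $[0,T)$, so one cannot invoke well-posedness of \rf{weak-syst} globally on $[t,T]$ nor pass to the limit in one stroke. The resolution built into the plan is to never solve \rf{weak-syst} directly with $(\Th^*,v^*)$; instead I produce the globally well-behaved open-loop state $X^*$ first, and only verify the feedback decomposition $u^*=\Th^* X^*+v^*$ a posteriori on each $[t,T']$. One should also check that the single pair $(\Th^*,v^*)$ constructed from $t=0$ in the Propositions serves on every $[t,T)$, which holds because $\Th_\e$ and $v_\e$ are defined on all of $[0,T]$ independently of the initial time $t$.
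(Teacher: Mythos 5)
Your proposal is correct and follows essentially the same route as the paper: identify the outcome of $(\Th^*(\cd),v^*(\cd))$ with the strong limit $u^*(\cd)$ of $\{u_\e(\cd)\}$ by combining the state estimate of \autoref{lmm:well-posedness-SDE} with the local convergences from Propositions \ref{prop:limit-The} and \ref{prop:limit-ve}, then read off optimality from \rf{weak-closed-optimal*}. The only cosmetic difference is the order of the splitting of the cross term $\Th_\e X_\e-\Th^*X^*$, which is immaterial.
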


\begin{proof}
Take an arbitrary initial pair $(t,x)\in[0,T)\times\dbR^n$ and let $\{u_\e(s);t\les s\les T\}_{\e>0}$
be the family defined by \rf{u-e}.  Since Problem (SLQ) is open-loop solvable at $(t,x)$,
by \autoref{thm:SLQ-oloop-kehua}, $\{u_\e(s);t\les s\les T\}_{\e>0}$ converges strongly to an open-loop
optimal control $\{u^*(s);t\les s\les T\}$ of Problem (SLQ) (for the initial pair $(t,x)$).
Let $\{X^*(s);t\les s\les T\}$ be the corresponding optimal state process; that is, $X^*$ is the solution to
$$\left\{\begin{aligned}
   dX^*(s) &=[A(s)X^*(s)+ B(s)u^*(s)+ b(s)]ds\\
         &\hp{=\ } +[C(s)X^*(s) + D(s)u^*(s)+\si(s)]dW(s),\q s\in[t,T],\\
     X^*(t)&= x.
\end{aligned}\right.$$
If we can show that
\bel{18-6-1}u^*(s)=\Th^*(s)X^*(s)+v^*(s), \q t\les s<T,\ee
then $(\Th^*(\cd),v^*(\cd))$ is clearly a weak closed-loop optimal strategy of Problem {\rm(SLQ)} on $[t,T)$.
To justify the argument, we note first that by \autoref{lmm:well-posedness-SDE},
$$\dbE\lt[\sup_{t\les s\les T}|X_\e(s)-X^*(s)|^2\rt] \les K\dbE\int^T_t|u_\e(s)-u^*(s)|^2ds \to0  \q\hb{as}\q  \e\to0,$$
where $\{X_\e(s);t\les s\les T\}$ is the solution to equation \rf{Xe}.
Second, by Propositions \ref{prop:limit-The} and \ref{prop:limit-ve},
\begin{align*}
\lim_{\e\to0}\int_0^{T^\prime}|\Th_\e(s)-\Th^*(s)|^2ds=0, \q\forall\, 0<T^\prime<T, \\
\lim_{\e\to0}\dbE\int_0^{T^\prime}|v_\e(s)-v^*(s)|^2ds=0, \q\forall\, 0<T^\prime<T.
\end{align*}
It follows that for any $0<T^\prime<T$,
\begin{align*}
&\dbE\int_0^{T^\prime} \big|[\Th_\e(s)X_\e(s)+v_\e(s)]-[\Th^*(s)X^*(s)+v^*(s)]\big|^2ds \\
&\q\les 2\dbE\int_0^{T^\prime} |v_\e(s)-v^*(s)|^2ds +2\dbE\int_0^{T^\prime} |\Th_\e(s)X_\e(s)-\Th^*(s)X^*(s)|^2ds \\
&\q\les 2\dbE\int_0^{T^\prime} |v_\e(s)-v^*(s)|^2ds +4\dbE\int_0^{T^\prime} |\Th_\e(s)|^2|X_\e(s)-X^*(s)|^2ds \\
&\q\hp{\les\ } + 4\dbE\int_0^{T^\prime} |\Th_\e(s)-\Th^*(s)|^2|X^*(s)|^2ds \\
&\q\les 2\dbE\int_0^{T'} |v_\e(s)-v^*(s)|^2ds
        +4\int_0^{T'}|\Th_\e(s)|^2ds \cd\dbE\[\sup_{t\les s\les T}|X_\e(s)-X^*(s)|^2\] \\
&\q\hp{\les\ } +4\int_0^{T^\prime} |\Th_\e(s)-\Th^*(s)|^2ds \cd\dbE\[\sup_{t\les s\les T}|X^*(s)|^2\]\to 0 \q\hb{as}\q \e\to0.
\end{align*}
Recall that $u_\e(s)=\Th_\e(s)X_\e(s)+v_\e(s);t\les s\les T$ converges strongly to $u^*(s);t\les s\les T$ in $L^2_\dbF(t,T;\dbR^m)$ as $\e\to0$.
Thus, \rf{18-6-1} must hold.
The above argument shows that the open-loop solvability implies the weak closed-loop solvability.
The reverse implication is obvious by \autoref{def-wcloop}.
\end{proof}

\section{An Example}\label{Sec:Example}

In this section we present an example in which the LQ problem is open-loop solvable
(and hence weakly closed-loop solvable) but not closed-loop solvable.
This example illustrates the procedure for finding weak closed-loop optimal strategies.

\begin{example}\label{ex-5.1}\rm
Consider the following Problem (SLQ) with one-dimensional state equation
$$\left\{\begin{aligned}
   dX(s) &= [-X(s)+u(s)+b(s)]ds + \sqrt{2}X(s)dW(s), \q s\in[t,1],\\
    X(t) &= x,
\end{aligned}\right.$$
and cost functional
$$ J(t,x;u(\cd))=\dbE |X(1)|^2, $$
where the nonhomogeneous term $b(\cd)$ is given by
$$ b(s)=\left\{\begin{aligned}
   & {e^{\sqrt{2}W(s)-2s}\over\sqrt{1-s}}; && s\in[0,1), \\
   & 0;                                    && s=1.
\end{aligned}\right.$$
It is easily seen that $b(\cd)\in L^2_\dbF(\Om;L^1(0,1;\dbR))$. In fact,
\begin{align*}
\dbE\lt(\int_0^1|b(s)|ds\rt)^2
   &= \dbE\lt(\int_0^1{e^{\sqrt{2}W(s)-2s}\over\sqrt{1-s}} ds\rt)^2
      \les \dbE\lt(\int_0^1{e^{\sqrt{2}W(s)-s}\over\sqrt{1-s}} ds\rt)^2 \\
   &\les \dbE\lt(\int_0^1{1\over\sqrt{1-s}} ds \cd\sup_{0\les s\les1}e^{\sqrt{2}W(s)-s}\rt)^2 \\
   &= \lt(\int_0^1{1\over\sqrt{1-s}} ds\rt)^2 \cd\dbE\lt(\sup_{0\les s\les1}e^{\sqrt{2}W(s)-s}\rt)^2 \\
   &= 4\,\dbE\lt(\sup_{0\les s\les1}e^{\sqrt{2}W(s)-s}\rt)^2.
\end{align*}
Since $\{e^{\sqrt{2}W(s)-s};s\ges0\}$ is a square-integrable martingale,
it follows from Doob's maximal inequality that
$$\dbE\lt(\sup_{0\les s\les1}e^{\sqrt{2}W(s)-s}\rt)^2 \les 4\,\dbE e^{2\sqrt{2}W(1)-2} = 4e^2.$$
Thus, $\dbE\big(\int_0^1|b(s)|ds\big)^2 \les16e^2 <\i$.

\ms

We first claim that this LQ problem is not closed-loop solvable on any $[t,1]$.
Indeed, the generalized Riccati equation associated with this problem reads
$$\left\{\begin{aligned}
   \dot P(s) &=P(s)0^\dag P(s)=0, \q s\in[t,1],\\
        P(1) &=1,
\end{aligned}\right.$$
whose solution is, obviously, $P(s)\equiv 1$. For any $s\in[t,1]$, we have
\begin{align*}
   &\sR(B(s)^\top P(s)+D(s)^\top P(s)C(s)+S(s))=\sR(1)=\dbR, \\
   &\sR(R(s)+D(s)^\top P(s)D(s))=\sR(0)=\{0\},
\end{align*}
so the range inclusion condition \rf{range-inclusion} is not satisfied.
Our claim then follows from \autoref{thm:SLQ-cloop-kehua}.

\ms

Next we use \autoref{thm:SLQ-oloop-kehua} to conclude that the above LQ problem is open-loop solvable
(and hence, by \autoref{thm:open=weak-closed}, weakly closed-loop solvable).
Without loss of generality, we consider only the open-loop solvability at $t=0$.
To this end, let $\e>0$ be arbitrary and consider the Riccati equation \rf{Ric-e}, which, in our example, reads:
\bel{ex-Pe}\left\{\begin{aligned}
   \dot P_\e(s) &={P_\e(s)^2\over\e}, \q s\in[0,1],\\
        P_\e(1) &=1.
\end{aligned}\right.\ee
Solving \rf{ex-Pe} by separating variables, we get
$$ P_\e(s)={\e\over \e+1-s}, \q s\in[0,1]. $$
Let
\begin{align}\label{Th_e}
\Th_\e &\deq-(R+\e I_m+D^\top P_\e D)^{-1}(B^\top P_\e+D^\top P_\e C+S) \nn\\
       &=-{P_\e\over\e} =-{1\over \e+1-s},\qq s\in[0,1].
\end{align}
Then the corresponding BSDE \rf{eta-zeta-e} reads
$$\left\{\begin{aligned}
   d\eta_\e(s) &=-\big[(\Th_\e-1)^\top\eta_\e(s) + \sqrt{2}\z_\e(s) +P_\e b\big]ds +\z_\e(s) dW(s), \q s\in[0,1],\\
    \eta_\e(1) &=0.
\end{aligned}\right.$$
Denote $f(s)={1\over\sqrt{1-s}}$. Using the variation of constants formula for BSDEs, we obtain
\begin{align*}
\eta_\e(s)
  &={\e\over\e+1-s}\,e^{2s-\sqrt{2}W(s)}\,\dbE\lt[\int_s^1 e^{\sqrt{2}W(r)-2r}b(r)dr\bigg|\cF_s\rt] \\
  &={\e\over\e+1-s}\,e^{2s-\sqrt{2}W(s)}\,\dbE\lt[\int_s^1 e^{2\sqrt{2}W(r)-4r}f(r)dr\bigg|\cF_s\rt] \\
  &={\e\over\e+1-s}\,e^{\sqrt{2}W(s)-2s}\int_s^1 f(r)dr, \q s\in[0,1].
\end{align*}
Let
\begin{align}\label{v_e}
v_\e &\deq -(R+\e I_m+D^\top P_\e D)^{-1}(B^\top\eta_\e+D^\top\z_\e+D^\top P_\e\si+\rho) \nn\\
     &= -\e^{-1}\eta_\e =-{1\over\e+1-s}\,e^{\sqrt{2}W(s)-2s}\int_s^1 f(r)dr,\qq s\in[0,1].
\end{align}
Then the corresponding closed-loop system \rf{Xe} can be written as
$$\left\{\begin{aligned}
   dX_\e(s) &= \big\{[\Th_\e(s)-1]X_\e(s)+v_\e(s)+b(s)\big\}ds  \\
            &\hp{=\ } +\sqrt{2}X_\e(s)dW(s), \q s\in[0,1],\\
    X_\e(0) &=x.
\end{aligned}\right.$$
By the variation of constants formula for SDEs, we get
\begin{align*}
X_\e(s) &= (\e+1-s)\,e^{\sqrt{2}W(s)-2s}\int_0^s {1\over\e+1-r}e^{-[\sqrt{2}W(r)-2r]}[v_\e(r)+b(r)]dr \\
&\hp{=\ } +{\e+1-s\over\e+1}\,e^{\sqrt{2}W(s)-2s}x, \q s\in[0,1].
\end{align*}
In light of \autoref{thm:SLQ-oloop-kehua}, to prove the open-loop solvability at $(0,x)$, it suffices to show the family $\{u_\e(\cd)\}_{\e>0}$ defined by
\begin{eqnarray}\label{u_e}
u_\e(s) &\3n\deq\3n& \Th_\e(s)X_\e(s)+v_\e(s) \nn\\
        &\3n=\3n& -e^{\sqrt{2}W(s)-2s}\int_0^s {1\over\e+1-r}e^{-[\sqrt{2}W(r)-2r]}[v_\e(r)+b(r)]dr \nn\\
        &\3n~\3n& -{x\over\e+1}\,e^{\sqrt{2}W(s)-2s} +v_\e(s), \qq s\in[0,1]
\end{eqnarray}
is bounded in $L^2_\dbF(0,1;\dbR)$. For this, let us first simplify \rf{u_e}. By Fubini's theorem,
\begin{align}\label{simplify-1}
   & \int_0^s {1\over \e+1-r}\,e^{-[\sqrt{2}W(r)-2r]}v_\e(r)dr = -\int_0^s {1\over (\e+1-r)^2}\int_r^1 f(\t)d\t dr \nn\\
   &\q= -\int_0^s f(\t)\int_0^\t{1\over (\e+1-r)^2}dr d\t - \int_s^1f(\t)\int_0^s{1\over (\e+1-r)^2}dr d\t\nn\\
   &\q= -\int_0^s {1\over \e+1-\t}f(\t) d\t + {1\over \e+1}\int_0^1 f(\t) d\t -{1\over\e+1-s}\int_s^1 f(\t) d\t.
\end{align}
On the other hand,
\bel{simplify-2} \int_0^s {1\over \e+1-r}\,e^{-[\sqrt{2}W(r)-2r]}b(r)dr =\int_0^s {1\over \e+1-r}f(r) dr. \ee
Substituting \rf{v_e}, \rf{simplify-1} and \rf{simplify-2} into \rf{u_e} yields
\bel{simplify-u_e} u_\e(s) = -{x\over\e+1}\,e^{\sqrt{2}W(s)-2s} -e^{\sqrt{2}W(s)-2s}{1\over\e+1}\int_0^1 f(r)dr
           = -{x+2\over\e+1}e^{\sqrt{2}W(s)-2s}. \ee
A short calculation gives
$$\dbE\int_0^1 |u_\e(s)|^2ds = \lt({x+2\over\e+1}\rt)^2 \les (x+2)^2, \q\forall \e>0.$$
Therefore, $\{u_\e(\cd)\}_{\e>0}$ is bounded in $L^2_\dbF(0,1;\dbR)$. Let $\e\to0$ in \rf{simplify-u_e}, we get an open-loop optimal
control:
$$u^*(s) =-(x+2)e^{\sqrt{2}W(s)-2s},\q s\in[0,1]. $$
\ms

Finally, we let $\e\to0$ in \rf{Th_e} and \rf{v_e} to get a weak closed-loop optimal strategy $(\Th^*(\cd),v^*(\cd))$:
\begin{align*}
  \Th^*(s) &=\lim_{\e\to0}\Th_\e(s) = -{1\over 1-s},  && s\in[0,1),\\
    v^*(s) &=\lim_{\e\to0}  v_\e(s) = -{1\over1-s}\,e^{\sqrt{2}W(s)-2s}\int_s^1 f(r)dr
            =-{2 e^{\sqrt{2}W(s)-2s}\over\sqrt{1-s}}, && s\in[0,1).
\end{align*}
We point out that neither $\Th^*(\cd)$ nor $v^*(\cd)$ is square-integrable on $[0,1)$. Indeed,
\begin{align*}
   \int_0^1 |\Th^*(s)|^2ds   &= \int_0^1 {1\over(1-s)^2}ds=\i, \\
   \dbE\int_0^1 |v^*(s)|^2ds &= \dbE\int_0^1 {4e^{2\sqrt{2}W(s)-4s}\over 1-s}ds = \int_0^1 {4\over 1-s}ds=\i.
\end{align*}
\end{example}

\end{document}